\documentclass[10.5pt,a4paper]{article}

\usepackage{graphicx,latexsym,euscript,makeidx,color,bm}
\usepackage{amsmath,amsfonts,amssymb,amsthm,thmtools,mathrsfs,enumerate}
\usepackage[colorlinks,linkcolor=blue,anchorcolor=green,citecolor=red]{hyperref}



\usepackage{geometry}
\geometry{left=2.8cm,right=2.8cm,top=3.0cm,bottom=3.0cm}

   \def\cA{{\cal A}}

\def\dbE{\mathbb{E}}     
\def\dbF{\mathbb{F}}   \def\cF{{\cal F}}  
     
\def\dbH{\mathbb{H}}

   \def\cL{{\cal L}}  
   \def\cM{{\cal M}}  
   \def\cN{{\cal N}}  
   \def\cO{{\cal O}}  
\def\dbP{\mathbb{P}}     
   \def\cQ{{\cal Q}}  
\def\dbR{\mathbb{R}}     
\def\dbS{\mathbb{S}}     
   \def\cT{{\cal T}}  
   \def\cU{{\cal U}}


\def\ss{\smallskip}      \def\lt{\left}       \def\hb{\hbox}
\def\ms{\medskip}        \def\rt{\right}      \def\ae{\hbox{\rm a.e.}}
        \def\lan{\langle}    \def\as{\hbox{\rm a.s.}}
   \def\ran{\rangle}    \def\tr{\hbox{\rm tr$\,$}}
\def\ts{\textstyle}      \def\llan{\lt\lan}   
\def\no{\noindent}       \def\rran{\rt\ran}   
     \def\esssup{\mathop{\rm esssup}}
     \def\essinf{\mathop{\rm essinf}}
     
\def\rf{\eqref}            \def\hp{\hphantom}
\def\deq{\triangleq}     \def\({\Big (}       \def\nn{\nonumber}
\def\les{\leqslant}      \def\){\Big )}       
\def\ges{\geqslant}      \def\[{\Big[}        
\def\ti{\tilde}          \def\]{\Big]}        
      \def\q{\quad}        
\def\h{\widehat}         \def\qq{\qquad}      \def\1n{\negthinspace}
\def\cd{\cdot}           \def\2n{\1n\1n}      \def\3n{\1n\1n\1n}


\def\a{\alpha}           \def\g{\gamma}   \def\Om{\Omega}  \def\om{\omega}
\def\b{\beta}            \def\d{\delta}        
             \def\Si{\Sigma}  \def\si{\sigma}
\def\e{\varepsilon}   \def\L{\Lambda}  \def\l{\lambda}        
    \def\t{\tau}     \def\f{\varphi}  \def\i{\infty}   


\def\ba{\begin{array}}                \def\ea{\end{array}}
\def\bel{\begin{equation}\label}      \def\ee{\end{equation}}

\newtheorem{theorem}{Theorem}[section]

\newtheorem{proposition}[theorem]{Proposition}

\newtheorem{remark}[theorem]{Remark}
\newtheorem{example}[theorem]{Example}

\newenvironment{taggedassumption}[1]
 {\taggedassumptionx}
 {\endtaggedassumptionx}

\newenvironment{taggedthm}[1]
 {\taggedthmx}
 {\endtaggedthmx}
\makeatletter
   
   \@addtoreset{equation}{section}
\makeatother
\sloppy  \allowdisplaybreaks[4]

\begin{document}

\title{\bf Linear-Quadratic Optimal Control for Backward Stochastic Differential Equations with Random Coefficients}
\author{Jingrui Sun\thanks{Department of Mathematics, Southern University of Science and Technology,
                           Shenzhen, 518055, China (Email: {\tt sunjr@sustech.edu.cn}).
                           This author is supported by NSFC grant 11901280, Guangdong Basic and Applied Basic Research Foundation 2021A1515010031, and SUSTech start-up funds Y01286128 and Y01286228.}  \q and\q
         Hanxiao Wang\thanks{Corresponding author. Department of Mathematics, National University of Singapore,
                           Singapore 119076, Singapore (Email: {\tt hxwang14@fudan.edu.cn}).
                           This author is supported by Singapore MOE AcRF Grants R-146-000-271-112.}
                           }

\maketitle

{\no\bf Abstract.}
This paper is concerned with a linear-quadratic (LQ, for short) optimal control problem
for backward stochastic differential equations (BSDEs, for short),
where the coefficients of the backward control system and the weighting matrices in the
cost functional are allowed to be random.
By a variational method, the optimality system, which is a coupled linear forward-backward
stochastic differential equation (FBSDE, for short), is derived, and by a Hilbert space method,
the unique solvability of the optimality system is obtained.
In order to construct the optimal control, a new stochastic Riccati-type equation is introduced.
It is proved that an adapted solution (possibly non-unique) to the Riccati equation exists
and decouples the optimality system.
With this solution, the optimal control is obtained in an explicit way.

\ms

{\no\bf Keywords.}
linear-quadratic optimal control,  backward stochastic differential equation,
random coefficient, stochastic Riccati equation.

\ms

{\no\bf AMS subject classifications.} 93E20, 49N10, 60H10.

\section{Introduction}\label{Sec:Introduction}

Let $(\Om,\cF,\dbP)$ be a complete probability space on which a standard one-dimensional Brownian motion
$W=\{W(t);t\ges0\}$ is defined,
and let $\dbF=\{\cF_t\}_{t\ges0}$ be the usual augmentation of the natural filtration generated by $W$.
For a random variable $\xi$, we write $\xi\in\cF_t$ if $\xi$ is $\cF_t$-measurable,
and for a process $\f$, we write $\f\in\dbF$ if it is $\dbF$-progressively measurable.
For a matrix $H=(h_{ij})\in\dbR^{k\times l}$, we use $|H|$ to denote the Frobenius norm of $H$, that is,
$|H|=(\sum_{i,j}|h_{ij}|^2)^{1\over 2}$.
Let $T>0$ be a fixed time horizon and $\dbH$ be a subset of $\dbR^{k\times l}$. For $t\in[0,T]$, we let
\begin{align*}
L_{\cF_t}^2(\Om;\dbH)
   &= \big\{\xi:\Om\to\dbH~|~\xi\in\cF_t~\hb{and}~\dbE|\xi|^2<\i\big\},\\
L_\dbF^\i(t,T;\dbH)
   &= \big\{\f:[t,T]\times\Om\to\dbH~|~\f\in\dbF~\hb{and is bounded}\big\}.
\end{align*}

\ss

Consider the controlled linear backward stochastic differential equation (BSDE, for short):
\bel{state}\left\{\begin{aligned}
   dY(s) &= \big\{A(s)Y(s)+ B(s)u(s)+C(s)Z(s)\big\}ds  + Z(s)dW(s), \\
    Y(T) &= \xi,
\end{aligned}\right.\ee
where $A,C\in L_\dbF^\i(0,T;\dbR^{n\times n})$ and $B\in L_\dbF^\i(0,T;\dbR^{n\times m})$,
called the {\it coefficients} of the {\it state equation} \rf{state}, are given processes;
$u:[0,T]\times\Om\to\dbR^m$, called a {\it control process}, is selected from a certain space
to influence the {\it state process} $(Y,Z)$;
and $\xi\in L_{\cF_T}^2(\Om;\dbR^n)$, called an {\it terminal state}, is a given random variable.
According to the standard result of BSDEs (see \cite{Yong-Zhou 1999}, for example), the state equation
\rf{state} admits a unique adapted solution $(Y,Z)\equiv (Y^{\xi,u},Z^{\xi,u})$ over $[t,T]$ whenever
the control $u$ is square-integrable over $[t,T]$, i.e., whenever $u$ belongs to the following space:
$$ \cU[t,T]\equiv L_{\dbF}^2(t,T;\dbR^m)
= \bigg\{\f:[t,T]\times\Om\to\dbR^m\bigm|\f\in\dbF~\hb{and}~\dbE\int^T_t|\f(s)|^2ds<\i\bigg\}. $$
Moreover, there exists a constant $K>0$, independent of $(t,x)$ and $u$, such that
$$ \dbE\bigg[\sup_{t\les s\les T}|Y(s)|^2 + \int_t^T|Z(s)|^2ds\bigg]
\les K\dbE\bigg[|\xi|^2 + \int^T_t|u(s)|^2ds\bigg].$$

\ss

Let $\dbS^n$ be the set of symmetric $n\times n$ real matrices, and let $\dbS_+^n$ be the subset of
$\dbS^n$ consisting of positive semi-definite matrices.
To measure the performance of the control process $u$ over $[t,T]$, we introduce the following
quadratic {\it cost functional}
\begin{align}\label{cost}
J(t,\xi;u) &= \dbE\bigg\{\lan G_tY(t),Y(t)\ran +\int_t^T\[\lan Q(s)Y(s),Y(s)\ran+\lan N(s)Z(s),Z(s)\ran \nn\\
           &\hp{=\dbE\bigg\{\lan G_tY(t),Y(t)\ran+\int_t^T\[~} +\lan R(s)u(s),u(s)\ran \]ds\bigg\},
\end{align}
where $G_t:\Om\to\dbS^n$ is a bounded $\cF_t$-measurable random variable, $Q,N\in L^\i_{\dbF}(0,T;\dbS_+^{ n})$,
$R\in L^\i_{\dbF}(0,T;\dbS_+^{m})$, and $\lan\cd\,,\cd\ran$ denotes the Frobenius inner product of two matrices.
With the state equation \rf{state} and the cost functional \rf{cost}, the {\it backward linear-quadratic
(LQ, for short) optimal control problem} can be stated as follows.

\begin{taggedthm}{Problem (BLQ)}
For given terminal state $\xi\in L_{\cF_{T}}^2(\Om;\dbR^n)$, find a control $u^*\in\cU[t,T]$ such that
\bel{inf} J(t,\xi; u^*) = \inf_{u\in\cU[t,T]}J(t,\xi;u) \equiv V(t,\xi). \ee
\end{taggedthm}

A control $u^*\in\cU[t,T]$ satisfying \rf{inf} is called an {\it optimal control} of Problem (BLQ)
for the terminal state $\xi$;
the corresponding state process $(Y^*,Z^*)\equiv(Y^{\xi,u^*},Z^{\xi,u^*})$ is called an {\it optimal state process};
the three-tuple $(Y^*,Z^*,u^*)$ is called an {\it optimal triple};
and the function $V$ is called the {\it value function} of Problem (BLQ).

\ms

The (forward) stochastic LQ problem (SLQ problem, for short) is a classical and fundamental problem in control theory,
which was initially studied by Wonham \cite{Wonham 1968} in 1968.
In the book \cite[Chapter 6]{Yong-Zhou 1999}, it shows that under the {\it standard conditions},
the SLQ problem with deterministic coefficients is (uniquely) solvable;
the  {\it optimality system}, which is a coupled forward-backward stochastic differential equation, can be decoupled;
and the optimal control can be further represented as a linear feedback of the current state,
in terms of the solution to the associated Riccati equation.
Another feature of SLQ problems is that the positive definiteness assumption on the weighting matrix of the control
is not necessary for its solvability.
This type of SLQ problems is called {\it indefinite} SLQ problems, see Chen--Li--Zhou \cite{Chen-Li-Zhou 1998} and some follow-up works
\cite{Chen-Zhou 2000,Rami-Moore-Zhou 2001,Sun-Li-Yong 2016,Wang-Sun-Yong 2019}.
When the coefficients and weighting matrices are allowed to be random,
the associated stochastic Riccati equation becomes a fully nonlinear BSDE with quadratic growth (see \cite{Bismut 1976}),
whose solvability was collected by Peng \cite{Peng 1999} in his list of open problems on BSDEs.
Since then many efforts have been devoted to the SLQ problem with random coefficients;
see Chen--Yong \cite{Chen-Yong 2001}, Kohlmann--Tang \cite{Kohlmann-Tang 2003},
Tang \cite{Tang 2003,Tang 2015}, Sun--Xiong--Yong\cite{Sun-Xiong-Yong 2018}, for instance.
It is particularly worthy to mention that in \cite{Tang 2003},
Tang first obtained a satisfactory solvability result for this type of stochastic Riccati equation under the standard conditions,
which serves as a foundation for our study on Problem (BLQ).
For more detailed history of SLQ problems, we refer the reader to the books by
Yong--Zhou \cite{Yong-Zhou 1999} and Sun--Yong \cite{Sun-Yong 2020}.

\ms

The LQ optimal control problem for BSDEs with deterministic coefficients and weighting matrices was initially studied by Lim--Zhou \cite{Lim-Zhou 2001} in 2001.
The theory of backward LQ optimal control problem has important applications in mathematical finance,
especially in financial investment problems with future conditions (as random variables) specified;
see, \cite{Lim 2004, Lim-Zhou 2001,Peng 1997, Yong-Zhou 1999}.
It also has a great potential in studying stochastic differential games, as a backward LQ optimal
control problem arises naturally when we consider the game in a leader-follower manner;
see for example, \cite{Yong 2002}.
Because of these facts, there has been extensive research on the LQ optimal control (and game) problems for BSDEs.
See, for example, Huang--Wang--Wu \cite{Huang 2016}, Wang--Xiao--Xiong \cite{Wang-Xiao-Xiong 2018},
Du--Huang--Wu \cite{Du 2018}, Li--Sun--Xiong \cite{Li-Sun-Xiong 2019}, and Bi--Sun--Xiong \cite{Bi 2019}.
Note that in our Problem (BLQ), the coefficients of \rf{state} and the weighting matrices in \rf{cost}
are allowed to be random.
This feature makes it more complicated and difficult to study.
Since it was proposed by Lim--Zhou in \cite{Lim-Zhou 2001}, there are few significative results
on Problem (BLQ) with random coefficients so far.

\ms

One difficulty in solving random-coefficient backward LQ Problems is the solvability of the
{\it stochastic Riccati equation}:
\bel{BLQ-Riccati-Equation1}\left\{\begin{aligned}
d\Si(s) &= \[\Si A^\top+A\Si+\Si Q\Si-BR^{-1}B^\top+\L N(I_n+\Si N)^{-1}\L-C(I_n+\Si N)^{-1}\Si C^\top\\
        &\hp{=\[} -C(I_n+\Si N)^{-1}\L-\L(I_n+N\Si)^{-1}C^\top\]ds-\L dW(s),\q s\in[0,T],\\
 \Si(T) &= 0,
\end{aligned}\right.\ee
where the argumet $s$ is suppressed for notational simplicity (and we will frequently do so in the sequel
if no confusion occurs).
Different from the deterninistic case studied in \cite{Lim-Zhou 2001},  equation \rf{BLQ-Riccati-Equation1}
is now a fully nonlinear BSDE with quadratic growth in $\L$.
Due to the presence of $\L$ and $(I_n+N\Si)^{-1}$, the  perturbed method used in \cite{Lim-Zhou 2001} cannot be easily applied to our Problem (BLQ).
To be more convincing, let us elaborate the difficulty we encountered in detail.
It is very hard to directly establish the solvability of \rf{BLQ-Riccati-Equation1}.
However, thanks to the results on solvability of the stochastic Riccati equation for forward LQ
optimal control problems (see \cite{Tang 2003,Sun-Xiong-Yong 2018}), the following perturbed equation
is easily seen to have a unique adapted solution $(\Si_\e,\L_\e)$ for each $\e>0$ (see \rf{def-Si-e}):
\bel{BLQ-Riccati-Equation-e1}\left\{\begin{aligned}
d\Si_\e(s) &= \[\Si_\e A^\top \!\!+\! A\Si_\e \!+\! \Si_\e Q\Si_\e \!-\! BR^{-1}B^\top
              \!\!+\! \L_\e N(I_n\!+\!\Si_\e N)^{-1}\L_\e \!-\! C(I_n\!+\!\Si_\e N)^{-1}\Si_\e C^\top \\
&~\hp{=\[} -C(I_n+\Si_\e N)^{-1}\L_\e-\L_\e(I_n+N\Si_\e)^{-1}C^\top\]ds -\L_\e dW(s), \\
\Si_\e(T)&=\e I_n.
\end{aligned}\right.\ee
The first componet $\Si_\e$ of the solution is bounded, positive semi-definite, and monotone in $\e$.
So the limit $\Si(s)\deq\lim_{\e\to0}\Si_\e(s)$ exits for \ae  ~$s\in[0,T]$, \as~
If all the coefficients are deterministic, then $\L_\e$ is identically zero for every $\e>0$ and hence
$(\Si,0)$ is a solution of \rf{BLQ-Riccati-Equation1}.
But in the random-coefficient case, the $\L_\e$'s are nonzero stochastic processes, even not bounded.
The standard stability estimate also fails here since \rf{BLQ-Riccati-Equation-e1} is a multi-dimensional
quadratic BSDE. The convergence of $\L_\e$ therefore becomes very unclear.
Instead of stubbornly proving the convergence of $\L_\e$, we shall combine the above perturbed approach
with a new method of undetermined coefficients to obtain the existence of a $\L$, which, together with
the limit of $\Si_\e$, gives the adapted solution of \rf{BLQ-Riccati-Equation1}.
We point out that although our work benefits from the results of Lim--Zhou \cite{Lim-Zhou 2001}
and Tang \cite{Tang 2003} a lot,
it is by no means a simple extension of the backward LQ problem from deterministic case to the random one.

\ms

On the other hand, for deterministic-coefficient backward LQ Problems, it has been shown in \cite{Lim-Zhou 2001} and
\cite{Li-Sun-Xiong 2019} that, in order to construct the optimal control, besides an ordinary Riccati equation,
one need also consider an associated uncontrolled BSDE with bounded deterministic coefficients.
Another difficulty is that for our Problem (BLQ), we have to consider the solvability of a BSDE with
{\it unbounded random coefficients}.
To our best knowledge, there are only a few papers dealing with such kind of BSDEs, and no existing results
ensure the existence of an adapted solution to the BSDE associated with Problem (BLQ).

\ms

The purpose of this paper is to overcome the above difficulties and to give a complete solution of Problem (BLQ)
under the following condition: For some $\d>0$,
\bel{condition} Q(s)\ges 0,\q N(s)\ges \d I_n,\q R(s)\ges\d I_m,\q s\in[0,T]. \ee
We shall show that Problem (BLQ) is uniquely solvable and establish the global solvability of the
stochastic Riccati equation \rf{BLQ-Riccati-Equation1} under the condition \rf{condition}.
With the adapted solution to \rf{BLQ-Riccati-Equation1}, we further introduce a decoupled system of
forward-backward stochastic differential equations (FBSDEs, for short) with unbounded random coefficients
and establish its unique solvability.
Then we provide an explicit representation for the unique optimal control of Problem (BLQ), in terms of
the solutions to \rf{BLQ-Riccati-Equation1} and the decoupled system of FBSDEs.

\ms

The rest of this paper is organized as follows.
In Section \ref{Preliminaries}, we collect some preliminary results of BSDEs.
Section \ref{sec3} is devoted to deriving the optimality system for Problem (BLQ)
and establishing its unique solvability.
To decouple the optimality system, we introduce a new stochastic Riccati-type equation
and a decoupled system of FBSDEs with unbounded coefficients in Section \ref{sec4}.
Finally, we establish the solvabilities of the stochastic Riccati equation and the decoupled
system of FBSDEs in Section \ref{sec5}.

\section{Preliminaries}\label{Preliminaries}

Throughout this paper, $\dbR^{n\times m}$ is the Euclidean space consisting of $n\times m$ real matrices,
endowed with the Frobenius inner product $\lan M,N\ran\deq\tr[M^\top N]$, where $M^\top$ and $\tr(M)$
stand for the transpose and the trace of $M$, respectively.
The identity matrix of size $n$ is denoted by $I_n$.
When $m=1$, we simply write $\dbR^{n\times 1}$ as $\dbR^n$.
If there is no confusion, we shall use $\lan\cd\,,\cd\ran$ for inner products in possibly
different Hilbert spaces and denote by $|\cd|$ the norm induced by $\lan\cd\,,\cd\ran$.
Besides the notation introduced in Section \ref{Sec:Introduction}, the following notation will be also
frequently used in this paper:
\begin{align*}
L_{\cF_t}^\i(\Om;\dbH)
  &= \big\{\xi:\Om\to\dbH~|~\xi\in\cF_t~\hb{is bounded}\big\};\\
L_\dbF^\i(\Om;C([t,T];\dbH))
  &= \ts\Big\{\f:[t,T]\times\Om\to\dbH~|~\f\in\dbF~\hb{is continuous and bounded}\Big\};\\
L^1_\dbF(\Om;L^2(t,T;\dbH))
  &= \ts\Big\{\f:[t,T]\times\Om\to\dbH~|~\f\in\dbF~\hb{and}~\dbE\big[\int_t^T|\f(s)|^2ds\big]^{1\over2}<\i\Big\};\\
L_\dbF^2(t,T;\dbH)
  &= \ts\Big\{\f:[t,T]\times\Om\to\dbH~|~\f\in\dbF~\hb{and}~\dbE\int_t^T|\f(s)|^2ds<\i\Big\};\\
L_\dbF^2(\Om;C([t,T];\dbH))
  &= \ts\Big\{\f:[t,T]\times\Om\to\dbH~|~\f\in\dbF~\hb{is continuous and } \dbE\[\sup_{t\les s\les T}|\f(s)|^2\]<\i\Big\}.
\end{align*}
For $M,N\in\dbS^n$, we use the notation $M\ges N$ (respectively, $M>N$) to indicate that $M-N$ is positive
semi-definite (respectively, positive definite).
Further, for an $\dbS^n$-valued measurable function $F$ on $[t,T]$, we write
\begin{alignat*}{3}
& F \ges 0  &&\q \hb{if}~  F(s)\ges 0,       &&\q \ae~s\in[t,T],\\
& F   >  0  &&\q \hb{if}~  F(s)   > 0,       &&\q \ae~s\in[t,T],\\
& F \gg  0  &&\q \hb{if}~  F(s)\ges \d I_n,  &&\q \ae~s\in[t,T],~\hb{for some}~\d>0.
\end{alignat*}
We will say that $F$ is uniformly positive definite if $F\gg0$.

\ms

For the state system \rf{state} and the cost functional \rf{cost}, we impose the following assumptions.

\begin{taggedassumption}{(H1)}\label{ass:H1}\rm
The coefficients of the state equation \rf{state} satisfy
$$ A,C\in L^\i_{\dbF}(0,T;\dbR^{n\times n}), \q B\in L^\i_{\dbF}(0,T;\dbR^{n\times m}).$$
\end{taggedassumption}

\begin{taggedassumption}{(H2)}\label{ass:H2}\rm
The weighting coefficients in the cost functional \rf{cost} satisfy
$$ Q,N\in L^\i_{\dbF}(0,T;\dbS_+^n),
\q R\in L^\i_{\dbF}(0,T;\dbS_+^m),\q G_t\in L_{\cF_t}^\i(\Om;\dbS_+^n).$$
Moreover, there exists a constant $\d>0$ such that
$$ R(s)\ges\d I_m,\q\ae~s\in[0,T],~\as $$
\end{taggedassumption}

We now present a result concerning the well-posedness of the state equation \rf{state}.

\begin{theorem}\label{lmm:well-posedness-BSDE}
Let {\rm\ref{ass:H1}} hold. Then for any terminal state $\xi\in L_{\cF_{T}}^2(\Om;\dbR^n)$
and control $u\in\cU[t,T]$, state equation \rf{state} admits a unique adapted solution
$$
(Y,Z)\equiv (Y^{\xi,u},Z^{\xi,u})\in L_\dbF^2(\Om;C([t,T];\dbR^n))\times L_{\dbF}^2(t,T;\dbR^n).
$$
Moreover, there exists a constant $K>0$, independent of $t,\xi$ and $u$, such that
\begin{align}
\label{well-BSDE-E}
\dbE\bigg[\sup_{t\les s\les T}|Y(s)|^2+\int_t^T|Z(s)|^2ds\bigg]
   &\les K\dbE\bigg[|\xi|^2+ \int^T_t|u(s)|^2ds\bigg], \\
\label{well-BSDE-Et}
\sup_{t\les s\les T}\dbE_t|Y(s)|^2 + \dbE_t\int_t^T|Z(s)|^2ds
   &\les K\dbE_t\bigg[|\xi|^2+ \int^T_t|u(s)|^2ds\bigg],\q\as,
\end{align}
where $\dbE_t[\,\cd\,]=\dbE[\,\cd\,|\,\cF_t]$ is the conditional expectation operator.
\end{theorem}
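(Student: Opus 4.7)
The plan is to treat \rf{state} as a standard linear BSDE with a globally Lipschitz driver and invoke classical estimates. Setting $f(s,y,z)=A(s)y+B(s)u(s)+C(s)z$, assumption \ref{ass:H1} makes $f$ uniformly Lipschitz in $(y,z)$ with constant controlled by $\|A\|_\i+\|C\|_\i$, while $f(\cd,0,0)=B(\cd)u(\cd)\in L^2_\dbF(t,T;\dbR^n)$ since $B$ is bounded and $u\in\cU[t,T]$. Combined with $\xi\in L^2_{\cF_T}(\Om;\dbR^n)$, the standard Pardoux--Peng existence/uniqueness theorem for BSDEs with Lipschitz drivers (see, e.g., \cite{Yong-Zhou 1999}) immediately yields a unique adapted solution $(Y,Z)$ in the asserted function spaces.

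For the unconditional estimate \rf{well-BSDE-E}, I would apply It\^o's formula to $e^{\l s}|Y(s)|^2$ on $[r,T]$ for arbitrary $r\in[t,T]$, obtaining
\[
e^{\l r}|Y(r)|^2+\int_r^T \!e^{\l s}\big(\l|Y|^2+|Z|^2\big)ds
= e^{\l T}|\xi|^2+2\!\int_r^T\! e^{\l s}\lan Y,AY+Bu+CZ\ran ds-2\!\int_r^T\! e^{\l s}\lan Y,Z\ran dW.
\]
A Young inequality on $2\lan Y,CZ\ran$ absorbs half of $|Z|^2$ into the left-hand side, and choosing $\l$ sufficiently large in terms of $\|A\|_\i$ and $\|C\|_\i$ lets the remaining $|Y|^2$ cross-terms be dominated by $\l|Y|^2$. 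Taking expectation removes the stochastic integral and delivers the $L^2$-bounds on $Y(r)$ and $Z$. The $\sup_s|Y(s)|^2$ part then follows by applying the Burkholder--Davis--Gundy inequality to the martingale term and reabsorbing the resulting $\dbE\big[\sup_s|Y(s)|^2\big]$ contribution with Young's inequality.

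For the conditional estimate \rf{well-BSDE-Et}, I would run the same It\^o identity on $[s,T]$ for $s\in[t,T]$ but take $\dbE_t[\,\cd\,]$ in place of $\dbE[\,\cd\,]$. The $L^2$-integrability of $(Y,Z)$ established in the previous step makes $\int_s^\cd e^{\l r}\lan Y,Z\ran dW$ a true martingale on $[s,T]$, so its conditional expectation vanishes. Rearranging yields the pointwise bound
\[
\dbE_t|Y(s)|^2+\dbE_t\int_s^T|Z(r)|^2dr\les K\,\dbE_t\Big[|\xi|^2+\int_s^T|u(r)|^2 dr\Big],\q\as,
\]
and taking $\sup_{s\in[t,T]}$ on the left (while using monotonicity in $s$ on the right) produces \rf{well-BSDE-Et}. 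The constant $K$ inherits its independence of $(t,\xi,u)$ from $\|A\|_\i$, $\|B\|_\i$, $\|C\|_\i$ alone.

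The argument is essentially routine; the only mild subtlety is justifying that the stochastic integral is a true (rather than merely local) martingale under $\dbE_t$, which is guaranteed by the a priori $L^2$-bounds from the unconditional step via a standard localization--dominated-convergence argument.
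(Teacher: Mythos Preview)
Your proposal is correct and follows essentially the same route as the paper: both arguments apply It\^o's formula to $|Y|^2$ (you use the exponentially weighted variant $e^{\l s}|Y|^2$, the paper uses the unweighted version together with Gr\"onwall), take (conditional) expectations to kill the stochastic integral, and use Young's inequality to absorb the $|Z|^2$ cross-term. The paper also cites the unconditional estimate \rf{well-BSDE-E} as standard and only writes out the conditional estimate \rf{well-BSDE-Et}, exactly as you do.
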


\begin{proof}
It is standard to obtain the existence and uniqueness of the adapted solution to \rf{state}
and the estimate \rf{well-BSDE-E}.
The  details and proofs of this reslut can be found in \cite{Yong-Zhou 1999}.
We only sketch the proof of estimate \rf{well-BSDE-Et} here.
Applying It\^{o}'s formula to $s\mapsto |Y(s)|^2$ yields that
\begin{align}
\nn&|Y(s)|^2+\int^T_s|Z(r)|^2dr+2\int^T_s Y(r)^\top Z(r)dW(r)\\
\label{lamma-bsde-ito1}
&\q= |\xi|^2+ 2\int^T_s Y(r)^\top\big[A(r)Y(r)+B(r)u(r)+C(r)Z(r)\big]dr,
                           \q s\in[t,T].
\end{align}
Taking conditional expectations with respect to $\cF_t$
on the both sides of \rf{lamma-bsde-ito1} and by \ref{ass:H1}, we get
\begin{align}
&\nn\dbE_t|Y(s)|^2+\dbE_t\int^T_s|Z(r)|^2dr\\
\label{lamma-bsde-ito}&\q\les K \dbE_t|\xi|^2
+K\dbE_t\bigg\{\int^T_s\[|Y(r)|^2+|Y(r)||Z(r)|+|Y(r)||u(r)|\]dr\bigg\},\q s\in[t,T].
\end{align}
By Young inequality, it is clearly seen from \rf{lamma-bsde-ito} that
\begin{align}
\nn&\dbE_t|Y(s)|^2+\dbE_t\int^T_s|Z(r)|^2dr\\
\label{lamma-bsde-ito2}&\q\les K \dbE_t|\xi|^2
+K\dbE_t\bigg\{\int^T_s\[|Y(r)|^2+|u(r)|^2\]dr\bigg\}
+{1\over 2}\dbE_t\int^T_s|Z(r)|^2dr,\q s\in[t,T],
\end{align}
which implies that
\bel{lamma-bsde-ito3}
\dbE_t|Y(s)|^2+\dbE_t\int^T_s|Z(r)|^2dr
\les K \dbE_t|\xi|^2+K\dbE_t\bigg\{\int^T_s\[|Y(r)|^2+|u(r)|^2\]dr\bigg\},
\q s\in[t,T].
\ee
The estimate \rf{well-BSDE-Et} then follows from Gr\"{o}nwall's inequality immediately.
\end{proof}

\ms

Under \ref{ass:H1}, \autoref{lmm:well-posedness-BSDE} shows that
for any $\xi\in L_{\cF_{T}}^2(\Om;\dbR^n)$ and  $u\in\cU[t,T]$,
state equation \rf{state} admits a unique adapted solution
$(Y,Z)\in L_\dbF^2(\Om;C([t,T];\dbR^n))\times L_\dbF^2(t,T;\dbR^n)$.
If, in addition, \ref{ass:H2} holds,
then the random variables on the right-hand side of \rf{cost} are integrable
and hence Problem (BLQ) is well-posed.
When the coefficients and the  weighting matrices reduce to deterministic functions,
 \ref{ass:H1}--\ref{ass:H2} are same as the Assumption (A1) in Lim--Zhou \cite{Lim-Zhou 2001}.
Moreover, \ref{ass:H2} implies the mapping $u\mapsto J(t,\xi;u)$ is uniformly convex,
which plays an important role in establishing the unique solvability of Problem (BLQ).

\section{Optimality Systems and Coupled FBSDEs}\label{sec3}
In this section, we shall  derive the  optimality system  for  the optimal control of Problem (BLQ)
by a variational method
and then study the unique solvability of the optimality system
from a Hilbert space point of view.

\begin{theorem}\label{thm-maximum-principal}
Suppose that {\rm\ref{ass:H1}} and {\rm\ref{ass:H2}} hold.
Then for any given terminal state $\xi\in L^2_{\cF_T}(\Om;\dbR^n)$,
 $u^*\in\cU[t,T]$ is  optimal for {\rm Problem (BLQ)} if and only if
the adapted solution $(Y^*,Z^*,X^*)$ to the following FBSDE
\bel{Y-X-star}
\left\{\begin{aligned}
&dY^*(s)=\big\{A(s)Y^*(s)+ B(s)u^*(s)+C(s)Z^*(s)\big\}ds \\
&\hp{dY^*(s)=}\q + Z^*(s)dW(s),\q s\in[t,T],\\
&dX^*(s)=\big\{-A(s)^\top X^*(s)+ Q(s)Y^*(s)\big\}ds \\
&\hp{dX^*(s)=}\q+ \big\{-C(s)^\top X^*(s)+ N(s)Z^*(s)\big\}dW(s),\q s\in[t,T],\\
&Y^*(T)= \xi,\q X^*(t)=G_tY^*(t)
\end{aligned}\right.
\ee
satisfies the following stationary condition:
\bel{stationary-condition}
R(s)u^*(s)-B(s)^\top X^*(s)=0,\q \ae\, s\in[t,T],\,\,\as
\ee
\end{theorem}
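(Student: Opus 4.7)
The plan is a first-variation (perturbation) argument leveraging the convexity of $u\mapsto J(t,\xi;u)$ supplied by \ref{ass:H2}. I will perturb $u^*$ by an arbitrary $v\in\cU[t,T]$, linearize \rf{state} (possible by its linearity), expand the cost to second order in the perturbation parameter, and finally dualize the first-order term against a forward adjoint process $X^*$ by applying It\^o's formula; this will naturally produce the coupled FBSDE \rf{Y-X-star} together with the stationary condition \rf{stationary-condition}.

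Fix $v\in\cU[t,T]$ and $\e\in\dbR$, set $u^\e=u^*+\e v$, and let $(Y^v,Z^v)$ denote the unique adapted solution (given by \autoref{lmm:well-posedness-BSDE}) of the BSDE with coefficients $A,B,C$, forcing $Bv$, and terminal value zero. By linearity of \rf{state}, the state corresponding to $u^\e$ is $(Y^*+\e Y^v,Z^*+\e Z^v)$. Substituting into \rf{cost} and using the symmetry of $G_t,Q,N,R$ yields
\begin{equation*}
J(t,\xi;u^\e)-J(t,\xi;u^*)=2\e L(v)+\e^2\, I(v),
\end{equation*}
where
\begin{align*}
L(v)&=\dbE\bigg\{\lan G_tY^*(t),Y^v(t)\ran+\int_t^T\big[\lan QY^*,Y^v\ran+\lan NZ^*,Z^v\ran+\lan Ru^*,v\ran\big]ds\bigg\},\\
I(v)&=\dbE\bigg\{\lan G_tY^v(t),Y^v(t)\ran+\int_t^T\big[\lan QY^v,Y^v\ran+\lan NZ^v,Z^v\ran+\lan Rv,v\ran\big]ds\bigg\}.
\end{align*}
Since \ref{ass:H2} forces $I(v)\ges 0$, $u^*$ is optimal if and only if $L(v)=0$ for every $v\in\cU[t,T]$.

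To rewrite $L(v)$, introduce $X^*$ as the unique adapted solution of the linear forward SDE appearing in \rf{Y-X-star} with initial value $X^*(t)=G_tY^*(t)$; since $A,C,Q,N,G_t$ are bounded and $Y^*\in L^2_\dbF(\Om;C([t,T];\dbR^n))$, $Z^*\in L^2_\dbF(t,T;\dbR^n)$ by \autoref{lmm:well-posedness-BSDE}, standard linear SDE theory yields $X^*\in L^2_\dbF(\Om;C([t,T];\dbR^n))$. Applying It\^o's formula to $s\mapsto\lan X^*(s),Y^v(s)\ran$ on $[t,T]$, the pairs $\lan-A^\top X^*,Y^v\ran+\lan X^*,AY^v\ran$ and $\lan-C^\top X^*,Z^v\ran+\lan X^*,CZ^v\ran$ cancel, the stochastic integrals are true martingales thanks to $L^2$-integrability, and using $Y^v(T)=0$ together with $X^*(t)=G_tY^*(t)$ one obtains after taking expectations
\begin{equation*}
-\dbE\lan G_tY^*(t),Y^v(t)\ran=\dbE\int_t^T\big[\lan QY^*,Y^v\ran+\lan NZ^*,Z^v\ran+\lan B^\top X^*,v\ran\big]ds,
\end{equation*}
which collapses the linear functional to $L(v)=\dbE\int_t^T\lan Ru^*-B^\top X^*,v\ran ds$.

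Since $v\in\cU[t,T]$ is arbitrary, $L\equiv0$ on $\cU[t,T]$ is equivalent to \rf{stationary-condition}, proving both directions of the theorem. The only step requiring genuine care is the It\^o duality computation, but the cancellations succeed precisely because the drift and diffusion of the $X^*$-equation involve $-A^\top$ and $-C^\top$, i.e.\ the transposes of the operators acting on $Y^v,Z^v$ in the BSDE, while $QY^*$ and $NZ^*$ supply the gradient directions needed to reproduce $L(v)$; no deeper analytical obstacle arises under \ref{ass:H1}--\ref{ass:H2}.
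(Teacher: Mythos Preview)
Your proof is correct and follows essentially the same approach as the paper: perturb $u^*$ by $\e v$, use linearity of \rf{state} to expand $J$ quadratically in $\e$, invoke \ref{ass:H2} for the nonnegativity of the second-order term, and then dualize the first-order term via It\^o's formula applied to $\lan X^*,Y^v\ran$ to obtain \rf{stationary-condition}. The only differences are notational ($v$ versus $u$ for the perturbation) and that you make the $L^2$-martingale justification and the well-posedness of the $X^*$-SDE slightly more explicit than the paper does.
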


\begin{proof}
By the definition of Problem (BLQ), $u^*$ is an optimal control if and only if
%
%
%
%
\bel{proof-thm-MP-e-op}
J(t,\xi;u^*)\les J(t,\xi;u^*+\e u),\q \forall \,\e\in\dbR,\, u\in\cU[t,T].
\ee
For any fixed but  arbitrary $\e\in\dbR$ and $u\in\cU[t,T]$,
let $(Y_{\e},Z_{\e})$ be the adapted solution of BSDE \rf{state}
corresponding to the terminal state $\xi$ and control $u^*+\e u$; that is
\bel{state-e}\left\{\begin{aligned}
dY_\e(s) &= \big\{A(s)Y_\e(s)+ B(s)(u^*(s)+\e u(s))+C(s)Z_\e(s)\big\}ds \\
         &\qq + Z_\e(s)dW(s), \q s\in[t,T],\\
Y_\e(T) &= \xi.
\end{aligned}\right.\ee
Let $(Y,Z)$ be the adapted solution of the following BSDE:
\bel{Y-u}
\left\{\begin{aligned}
dY(s)&=\big\{A(s)Y(s)+ B(s)u(s)+C(s)Z(s)\big\}ds \\
     &\qq+ Z(s)dW(s),\q s\in[t,T],\\         %
Y(T)&=0.
\end{aligned}\right.
\ee
By the linearity of BSDEs \rf{state-e}, \rf{Y-u}, \rf{Y-X-star}
and the uniqueness of the adapted solution to BSDE \rf{state-e}, we get
\bel{Y-u-Y-star-Y}
Y_\e=Y^*+\e Y,\q Z_\e=Z^*+\e Z.
\ee
Then it is straightforward to deduce the following representation
of the difference $J(t,\xi;u^*+\e u)-J(t,\xi;u^*)$:
\begin{align}
\nn&J(t,\xi;u^*+\e u)-J(t,\xi;u^*)\\
\nn&\q= \e^2\dbE\bigg\{\big\lan G_t Y(t),Y(t)\big\ran +\int_t^T\(\lan QY,Y\ran+\lan NZ,Z\ran+\lan Ru,u\ran \)ds\bigg\}\\
&\label{J-e-J}\hp{\q=} +2\e\dbE\bigg\{\big\lan G_t Y^*(t),Y(t)\big\ran
+\int_t^T\(\lan QY^*,Y\ran+\lan NZ^*,Z\ran+\lan Ru^*,u\ran \)ds\bigg\}.
\end{align}
Thus the condition \rf{proof-thm-MP-e-op} is equivalent to
\begin{align}
\nn& \e^2\dbE\bigg\{\big\lan G_t Y(t),Y(t)\big\ran +\int_t^T\(\lan QY,Y\ran+\lan NZ,Z\ran+\lan Ru,u\ran \)ds\bigg\}
+2\e\dbE\bigg\{\big\lan G_t Y^*(t),Y(t)\big\ran \\
\label{J-e-J-op}&\qq\qq +\int_t^T\(\lan QY^*,Y\ran+\lan NZ^*,Z\ran+\lan Ru^*,u\ran \)ds\bigg\}\ges 0,
\q\forall \e\in\dbR,\, u\in\cU[t,T].
\end{align}
It is clearly seen from \ref{ass:H2} that
$$
\dbE\bigg\{\big\lan G_tY(t),Y(t)\big\ran +\int_t^T\(\lan QY,Y\ran+\lan NZ,Z\ran+\lan Ru,u\ran \)ds\bigg\}
\ges 0,\q\forall u\in\cU[t,T].
$$
Note that for any fixed $u$ and $\xi$,
the left-hand term of \rf{J-e-J-op} could be regarded as a quadratic polynomial of the variable $\e$.
Hence, \rf{J-e-J-op} holds if and only if
\bel{J-e-term}
\dbE\bigg\{\big\lan G_tY^*(t),Y(t)\big\ran
+\int_t^T\(\lan QY^*,Y\ran+\lan NZ^*,Z\ran+\lan Ru^*,u\ran \)ds\bigg\}=0,\q\forall u\in\cU[t,T].
\ee
By applying It\^{o}'s formula to $s\mapsto\lan X^*(s),Y(s)\ran$ on $[t,T]$ and then taking expectation, we get
$$
\dbE\big\lan G_tY^*(t),Y(t)\big\ran=\dbE\big\lan X^*(t),Y(t)\big\ran
=-\dbE\int_t^T\(\lan QY^*,Y\ran+\lan B^\top X^*,u\ran+\lan NZ^*,Z\ran \)ds.
$$
Substituting the above into \rf{J-e-term} yields that
\bel{J-e-term-rw}
\dbE\int_t^T\big\lan Ru^*- B^\top X^*,u\big\ran ds=0,\q\forall u\in\cU[t,T],
\ee
which implies that the stationary condition \rf{stationary-condition} holds.
By reversing the above arguments, the sufficiency of \rf{stationary-condition} follows easily.
\end{proof}

The system \rf{Y-X-star}, together with the  stationary condition \rf{stationary-condition},
is referred to as  the {\it optimality system} for Problem (BLQ).
For any given $u^*\in\cU[t,T]$, the system \rf{Y-X-star} is a decoupled FBSDE.
However, note that the optimal control $u^*$ necessarily satisfies the
stationary condition \rf{stationary-condition}, which is equivalent to
\bel{u-X}
u^*(s)=R(s)^{-1}B(s)^\top X^*(s),\q \ae\, s\in[t,T],\,\as
\ee
Substituting the above into \rf{Y-X-star}, the optimality system becomes a coupled FBSDE as follows:
\bel{Y-X-star-nou}\left\{\begin{aligned}
&dY^*(s)=\big\{A(s)Y^*(s)+ B(s)R(s)^{-1}B(s)^\top X^*(s)+C(s)Z^*(s)\big\}ds \\
&\qq\qq\q + Z^*(s)dW(s),\q s\in[t,T],\\
&dX^*(s)=\big\{-A(s)^\top X^*(s)+ Q(s)Y^*(s)\big\}ds \\
&\qq\qq\q + \big\{-C(s)^\top X^*(s)+ N(s)Z^*(s)\big\}dW(s),\q s\in[t,T],\\
&Y^*(T)= \xi,\q X^*(t)=G_tY^*(t).
\end{aligned}\right.\ee
In the subsequent analysis, we shall consider the well-posedness of FBSDE \rf{Y-X-star-nou}.
To begin with, we present a unique solvability result of Problem (BLQ).

\begin{theorem}\label{thm-BLQ-solvability}
Suppose that {\rm\ref{ass:H1}} and {\rm\ref{ass:H2}} hold.
Then for any terminal state $\xi\in L^2_{\cF_T}(\Om;\dbR^n)$,
{\rm Problem (BLQ)} admits a unique optimal control.
\end{theorem}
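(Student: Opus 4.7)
The plan is to exploit the Hilbert space structure of $\cU[t,T]$ and show that $u\mapsto J(t,\xi;u)$ is a continuous, symmetric, uniformly convex quadratic functional on it; the existence and uniqueness of the optimal control then follow from the standard minimization theory on Hilbert spaces.

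First I would use the linearity of the state equation \rf{state} to split $(Y^{\xi,u},Z^{\xi,u})$ into two pieces. Let $(Y_0,Z_0)$ denote the adapted solution of \rf{state} with terminal state $\xi$ and control $0$, and for each $u\in\cU[t,T]$ let $(\cY_u,\cZ_u)$ denote the adapted solution with terminal state $0$ and control $u$. Uniqueness of the adapted solution gives $Y^{\xi,u}=Y_0+\cY_u$ and $Z^{\xi,u}=Z_0+\cZ_u$, and by \autoref{lmm:well-posedness-BSDE} the operator $\cL:u\mapsto(\cY_u,\cZ_u)$ is bounded and linear from $\cU[t,T]$ into $L^2_\dbF(\Om;C([t,T];\dbR^n))\times L^2_\dbF(t,T;\dbR^n)$.

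Substituting this decomposition into \rf{cost} and expanding, one may write
\begin{equation*}
J(t,\xi;u) \;=\; a(u,u) + 2\,\ell(u) + c,
\end{equation*}
where
\begin{equation*}
a(u,v) \;=\; \dbE\Big\{\lan G_t\cY_u(t),\cY_v(t)\ran + \int_t^T\big[\lan Q\cY_u,\cY_v\ran + \lan N\cZ_u,\cZ_v\ran + \lan Ru,v\ran\big]ds\Big\}
\end{equation*}
is a symmetric bilinear form on $\cU[t,T]\times\cU[t,T]$ (continuous by \ref{ass:H1}, \ref{ass:H2} and the bound on $\cL$), $\ell$ is the linear functional collecting the cross terms between $(Y_0,Z_0)$ and $(\cY_u,\cZ_u,u)$, and $c\ges0$ is a constant independent of $u$. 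Continuity of $\ell$ on $\cU[t,T]$ comes from Cauchy--Schwarz combined with the fixed $L^2$-integrability of $(Y_0,Z_0)$ and the operator bound on $\cL$.

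The key step is coercivity. Since $G_t\ges0$, $Q\ges0$, $N\ges0$, and $R\ges\d I_m$ by \ref{ass:H2},
\begin{equation*}
a(u,u) \;\ges\; \dbE\int_t^T\lan R(s)u(s),u(s)\ran ds \;\ges\; \d\,\dbE\int_t^T|u(s)|^2ds \;=\; \d\,\|u\|_{\cU[t,T]}^2.
\end{equation*}
Thus $a$ is a continuous, symmetric, coercive bilinear form on the Hilbert space $\cU[t,T]$, and $\ell$ is a continuous linear functional. The Lax--Milgram theorem (or equivalently, the direct method of the calculus of variations applied to the strictly convex coercive functional $u\mapsto a(u,u)+2\ell(u)$) then yields a unique $u^*\in\cU[t,T]$ solving $a(u^*,\cd)=-\ell(\cd)$, which is the unique minimizer of $J(t,\xi;\cd)$.

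I do not anticipate any serious obstacle: the only point requiring care is the verification that each cross term appearing in $\ell$ (for example, $u\mapsto\dbE\int_t^T\lan QY_0,\cY_u\ran ds$) is a bounded linear functional on $\cU[t,T]$, which reduces to the boundedness of the weighting coefficients together with the $L^2$-continuity of $\cL$. Everything else is a direct consequence of the uniform positive definiteness of $R$ and the non-negativity of the remaining weights.
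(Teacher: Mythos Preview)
Your proposal is correct and follows essentially the same approach as the paper: decompose the state via linearity into the $\xi$-part and the $u$-part, rewrite $J(t,\xi;\cd)$ as a continuous quadratic functional on the Hilbert space $\cU[t,T]$, and use $R\ges\d I_m$ together with the nonnegativity of $G_t,Q,N$ to obtain coercivity/strict convexity, whence a unique minimizer. The paper packages the same computation in operator form (writing the quadratic part as $\lan(\cM^*G_t\cM+\cL^*\cQ\cL+R)u,u\ran$) and appeals to ``the basic theorem in convex analysis'' rather than Lax--Milgram, but the argument is the same.
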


\begin{proof}
For any $u\in \cU[t,T]$, consider the following BSDE:
\bel{Y-u-0}\left\{\begin{aligned}
   dY^{0,u}(s) &=\big\{A(s)Y^{0,u}(s)+ B(s)u(s)+C(s)Z^{0,u}(s)\big\}ds \\
               &\q + Z^{0,u}(s)dW(s),\qq s\in[t,T],\\
     Y^{0,u}(T)&= 0.
\end{aligned}\right.\ee
By \autoref{lmm:well-posedness-BSDE}, the above BSDE admits a unique adapted solution
$(Y^{0,u},Z^{0,u})\in L_\dbF^2(\Om;C([t,T];\dbR^n))\times L_\dbF^2(t,T;\dbR^n)$.
By the linearity of BSDE \rf{Y-u-0}, we can define two bounded linear operators
$\cL:\cU[t,T]\to L_\dbF^2(\Om;C([t,T];\dbR^n))\times L_\dbF^2(t,T;\dbR^n)$
and $\cM:\cU[t,T]\to L_{\cF_t}^2(\Om;\dbR^n)$ as follows:
\bel{def-cL-cM}
\cL u=(Y^{0,u},Z^{0,u}),\q \cM u=Y^{0,u}(t),\q u\in\cU[t,T].
\ee
Also we can define the linear operators
$\cN:L^2_{\cF_T}(\Om;\dbR^n)\to L_\dbF^2(\Om;C([t,T];\dbR^n))\times L_\dbF^2(t,T;\dbR^n)$
and $\cO:L^2_{\cF_T}(\Om;\dbR^n)\to L_{\cF_t}^2(\Om;\dbR^n)$ as follows:
\bel{def-cN-cO}
\cN\xi=(Y^{\xi,0},Z^{\xi,0}),\q \cO\xi=Y^{\xi,0}(t),\q \xi\in L^2_{\cF_T}(\Om;\dbR^n),
\ee
with $(Y^{\xi,0},Z^{\xi,0})$ being the adapted solution of the following BSDE:
\bel{Y-0-xi}\left\{\begin{aligned}
   dY^{\xi,0}(s) &=\big\{A(s)Y^{\xi,0}(s)+C(s)Z^{\xi,0}(s)\big\}ds  + Z^{\xi,0}(s)dW(s),\q s\in[t,T],\\
     Y^{\xi,0}(T)&= \xi.
\end{aligned}\right.\ee
Observe that for any $(\xi,u)\in L^2_{\cF_T}(\Om;\dbR^n)\times\cU[t,T]$,
the sum $(Y^{0,u}+Y^{\xi,0},\,Z^{0,u}+Z^{\xi,0})$ satisfies BSDE \rf{state}.
By the uniqueness of the adapted solution to BSDE \rf{state}, we get
\bel{Y-Y-u0-oxi}
(Y,Z)=(Y^{0,u}+Y^{\xi,0},Z^{0,u}+Z^{\xi,0})=\cL u+\cN\xi.
\ee
In particular, the initial value  $Y(t)$  is given by
\bel{Yt-Yt-u0-oxi}
Y(t)=Y^{0,u}(t)+Y^{\xi,0}(t)=\cM u+\cO\xi.
\ee
Now let $\cA^*$ denote the adjoint operator of a linear operator $\cA$,
and define the bounded linear operator $\cQ:L_\dbF^2(t,T;\dbR^n)\times L_\dbF^2(t,T;\dbR^n)\to
L_\dbF^2(t,T;\dbR^n)\times L_\dbF^2(t,T;\dbR^n)$ by
\bel{def-cQ}
\cQ\deq \begin{pmatrix}Q & 0 \\ 0 & N\end{pmatrix}.
\ee
Then by the representations \rf{Y-Y-u0-oxi}, \rf{Yt-Yt-u0-oxi}, and \rf{def-cQ},
the cost functional \rf{cost} can be rewritten as follows:
\begin{align}
\nn J(t,\xi;u)&=\dbE\Bigg\{\lan G_t Y(t),Y(t)\ran
                 +\int_t^T\Bigg[\llan\begin{pmatrix}Q (s)& 0\\ 0 & N(s)\end{pmatrix}
                                 \begin{pmatrix}Y(s) \\ Z(s)\end{pmatrix},
                                 \begin{pmatrix}Y(s) \\ Z(s)\end{pmatrix}\rran \\
\nn&\hp{=\dbE\Bigg\{\lan G_t Y(t),Y(t)\ran+\int_t^T\Bigg[}
                 +\lan R(s)u(s),u(s)\ran \Big]ds\Big\}\\
\nn&=\big\lan G_t (\cM u+\cO\xi),\cM u+\cO\xi\big\ran+\big\lan \cQ(\cL u+\cN\xi),\cL u+\cN\xi\big\ran
      +\big\lan Ru,u\big\ran\\
\nn&=\big\lan (\cM^*G_t\cM+\cL^*\cQ \cL+R) u,u\big\ran+2\big\lan (\cO^*G_t\cM+\cN^*\cQ \cL) u,\xi\big\ran\\
\label{cost-functional-rewrite}&\hp{=\,}+\big\lan (\cO^*G_t\cO+\cN^*\cQ \cN) \xi,\xi\big\ran.
\end{align}
Since all the linear operators involved in the above are bounded, the map $u\mapsto J(t,\xi;u)$ is continuous.
Due to the facts that $\cQ\ges 0$, $G_t\ges 0$ and $R\ges \d I_n$ obtained from \ref{ass:H2}, we have
$$
\big\lan (\cM^*G_t\cM+\cL^*\cQ \cL+R) u,u\big\ran\ges\lan R u,u\ran
=\dbE\int_t^T\lan R(s)u(s),u(s)\ran ds\ges\d\dbE\int_t^T|u(s)|^2ds,
$$
which implies the map $u\mapsto J(t,\xi;u)$ is strictly convex,  and that
$$
J(t,\xi;u)\to\i\q \hbox{as}\q \dbE\int_t^T|u(s)|^2ds\to\i.
$$
Therefore, by the basic theorem in convex analysis,
the unique solvability of
Problem (BLQ), for any given terminal state $\xi\in L^2_{\cF_T}(\Om;\dbR^n)$, is obtained.
\end{proof}

Combining \autoref{thm-maximum-principal} with \autoref{thm-BLQ-solvability} together,
we get the unique solvability  of FBSDE \rf{Y-X-star-nou} immediately.

\begin{theorem}\label{thm-solvability-FBSDE}
Suppose that {\rm\ref{ass:H1}} and {\rm\ref{ass:H2}} hold.
Then for any terminal state $\xi\in L^2_{\cF_T}(\Om;\dbR^n)$,
the coupled FBSDE \rf{Y-X-star-nou} admits a unique adapted solution
$(Y^*,Z^*,X^*)\in L_\dbF^2(\Om;C([t,T];\dbR^n))\times L_{\dbF}^2(t,T;\dbR^n)\times L_\dbF^2(\Om;C([t,T];\dbR^n))$.
Moreover,  the unique optimal control of {\rm Problem (BLQ)} for $\xi$ is given by
\bel{u-X1}
u^*(s)=R(s)^{-1}B(s)^\top X^*(s),\q  s\in[t,T].
\ee
\end{theorem}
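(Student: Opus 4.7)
The statement combines the previous two theorems into a single existence-uniqueness result for the coupled FBSDE \rf{Y-X-star-nou}, together with the feedback-type representation \rf{u-X1}. My plan is to read off existence from Theorems \ref{thm-maximum-principal} and \ref{thm-BLQ-solvability} and then obtain uniqueness by a round-trip argument through the optimal control problem.

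For existence, I would first invoke \autoref{thm-BLQ-solvability} to produce the unique optimal control $u^*\in\cU[t,T]$ for the terminal state $\xi$. With this $u^*$ in hand, \autoref{lmm:well-posedness-BSDE} gives an adapted solution $(Y^*,Z^*)\in L_\dbF^2(\Om;C([t,T];\dbR^n))\times L_\dbF^2(t,T;\dbR^n)$ to the state equation \rf{state}. Applying \autoref{thm-maximum-principal} to $u^*$ yields an adapted process $X^*$ such that $(Y^*,Z^*,X^*)$ solves \rf{Y-X-star} and the stationary condition \rf{stationary-condition} holds. Since $R\gg 0$ is invertible by \ref{ass:H2}, the stationary condition is equivalent to \rf{u-X}, and substituting this into \rf{Y-X-star} shows that $(Y^*,Z^*,X^*)$ solves \rf{Y-X-star-nou}. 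The regularity $X^*\in L_\dbF^2(\Om;C([t,T];\dbR^n))$ follows by viewing the second line of \rf{Y-X-star-nou} as a linear forward SDE driven by $(Y^*,Z^*)$ with bounded coefficients from \ref{ass:H1}--\ref{ass:H2}, so standard SDE estimates give continuity of paths and the required square-integrability.

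For uniqueness, suppose $(\bar Y,\bar Z,\bar X)$ is any adapted solution of \rf{Y-X-star-nou} in the stated spaces. Define $\bar u(s)\deq R(s)^{-1}B(s)^\top\bar X(s)$ for $s\in[t,T]$; the boundedness of $R^{-1}$ and $B$ together with $\bar X\in L_\dbF^2(\Om;C([t,T];\dbR^n))$ ensures $\bar u\in\cU[t,T]$. Plugging $\bar u$ back into the equations, one checks directly that $(\bar Y,\bar Z,\bar X,\bar u)$ satisfies the FBSDE \rf{Y-X-star} together with the stationary condition \rf{stationary-condition}. The sufficiency part of \autoref{thm-maximum-principal} then certifies that $\bar u$ is optimal for Problem (BLQ), and by the uniqueness of the optimal control in \autoref{thm-BLQ-solvability} we conclude $\bar u=u^*$. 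Given this common control, the backward equation for $(\bar Y,\bar Z)$ coincides with that for $(Y^*,Z^*)$ and the uniqueness statement in \autoref{lmm:well-posedness-BSDE} forces $(\bar Y,\bar Z)=(Y^*,Z^*)$. Finally, $\bar X$ and $X^*$ both solve the same linear forward SDE with driver built from $(Y^*,Z^*)$ and the same initial value $G_tY^*(t)$, so the classical uniqueness theorem for linear SDEs yields $\bar X=X^*$.

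The representation \rf{u-X1} is immediate: applying \rf{u-X} to the (now unique) adapted solution $(Y^*,Z^*,X^*)$ of \rf{Y-X-star-nou} gives exactly the stated formula. I do not anticipate any serious obstacle in this argument, since the work has been done in the previous two theorems; the only subtle point is checking that $\bar u$ constructed from an arbitrary solution of \rf{Y-X-star-nou} actually lies in $\cU[t,T]$ so that \autoref{thm-maximum-principal} is applicable, but this is automatic from the boundedness of the coefficients in \ref{ass:H1}--\ref{ass:H2} and the integrability of $\bar X$.
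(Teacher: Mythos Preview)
Your argument is correct and is exactly the approach the paper takes: the paper simply states that the result follows immediately by combining \autoref{thm-maximum-principal} with \autoref{thm-BLQ-solvability}, and you have spelled out in detail the existence and uniqueness directions of that combination.
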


\begin{remark}\rm
We emphasize that in FBSDE \rf{Y-X-star-nou},
the terminal state $Y^*(T)$ is an arbitrary $\cF_T$-measurable random vector
and the initial state $X^*(t)$ is determined by the initial value of $Y^*$,
due to which   FBSDE \rf{Y-X-star-nou} is not Markovian even if its coefficients are deterministic functions.
Thus the form of FBSDE \rf{Y-X-star-nou} is a little different from the standard  FBSDEs
(see \cite{Ma-Yong 1999}, for example).
In particular, it will be interesting to give a ``Four-Step Scheme" for this type of FBSDEs.
\end{remark}

\section{Decoupling, Riccati equation,  BSDE and FSDE with unbounded coefficients}\label{sec4}
Since the optimality system is a fully coupled FBSDE,
it usually becomes difficult to find the optimal control by solving \rf{Y-X-star-nou} directly.
Then, to construct an optimal control from the optimality system  \rf{Y-X-star-nou},
a decoupling technique needs to be adopted.
Thus, we now introduce the following stochastic Riccati-type equation:
\bel{BLQ-Riccati-Equation}
\left\{\begin{aligned}
d\Si(s)&=\[\Si A^\top+A\Si+\Si Q\Si-BR^{-1}B^\top+\L N(I_n+\Si N)^{-1}\L-C(I_n+\Si N)^{-1}\Si C^\top\\
       &\hp{=\[}-C(I_n+\Si N)^{-1}\L-\L(I_n+N\Si)^{-1}C^\top\]ds-\L dW(s),\q s\in[0,T],\\
 \Si(T)&=0.
\end{aligned}\right.
\ee
If the above equation is solvable with $(\Si,\L)$ being a solution, we introduce the following BSDE:
\bel{BLQ-BSDE-unbounded}
\left\{\begin{aligned}
d\f(s)&=\big\{(A+\Si Q)\f-C(I_n+\Si N)^{-1}\b+\L N(I_n+\Si N)^{-1}\b\big\}ds\\
      &\hp{=}~-\b dW(s),\qq s\in[t,T], \\
 \f(T)&=-\xi.
\end{aligned}\right.
\ee
It is noteworthy that $\L$ is  merely square-integrable in general,
thus \rf{BLQ-BSDE-unbounded} is a BSDE with unbounded coefficients.
Suppose that BSDE \rf{BLQ-BSDE-unbounded} has a solution $(\f,\b)$,
we consider the following forward stochastic different equation (FSDE, for short):
\bel{BLQ-fSDE}
\left\{
\begin{aligned}
d X(s)&=-\big\{(A^\top+Q\Si)X+Q\f\big\}ds+\big\{-C^\top X+N(I_n+\Si N)^{-1}(\L+\Si C^\top)X\\
&\hp{=-\big\{(A^\top+Q\Si)X+Q\f\big\}ds+\big\{}+N(I_n+\Si N)^{-1}\b\big\}dW(s),\qq s\in[t,T], \\
X(t)&=-(I_n+G_t\Si(t))^{-1}G_t\f(t).
\end{aligned}\right.
\ee
Similar to \rf{BLQ-BSDE-unbounded},
the coefficients of FSDE \rf{BLQ-fSDE} are also unbounded in general.
Under the assumption that the above equations
\rf{BLQ-Riccati-Equation}--\rf{BLQ-BSDE-unbounded}--\rf{BLQ-fSDE} are solvable,
the following result provides a method of decoupling FBSDE \rf{Y-X-star-nou}.
At first, we impose an additional assumption for  the weighting matrix $N$
of cost functional \rf{cost}.

\begin{taggedassumption}{(H3)}\label{ass:H3}
There exist two constants $\d,\l>0$ such that
\bel{ass:H3-1}
\d I_n \les N(s)\les\l I_n,\qq a.s.,\,\, a.e.\, s\in[0,T],
\ee
or, equivalently,
\bel{ass:H3-2}
{1\over \l} I_n \les N(s)^{-1}\les{1\over \d} I_n,\qq a.s.,\,\, a.e.\, s\in[0,T],
\ee
where $N(s)^{-1}$ stands for the inverse of $N(s)$.
\end{taggedassumption}

Since $N$ is assumed to be  bounded in \ref{ass:H2},
the existence of $\l$ can follow from that easily.
The non-degenerate assumption (i.e., $\d I_n \les N(s)$) is a technical condition (see \rf{est-Z1}--\rf{ti-R1} for some reasons).
At the moment, we cannot improve it and we shall  come
back in our future publications.

\begin{theorem}\label{decoupling}
Let {\rm \ref{ass:H1}--\ref{ass:H2}--\ref{ass:H3}} hold.
Suppose that Riccati equation \rf{BLQ-Riccati-Equation}
has a solution $(\Si,\L)\in L^\i_{\dbF}(\Om;C([0,T];\dbS_+^n))\times L^2_{\dbF}(0,T;\dbS^{ n})$ such that
the corresponding decoupled system of BSDE \rf{BLQ-BSDE-unbounded}  and FSDE \rf{BLQ-fSDE} has a solution $(\f,\b,X)\in
L^2_{\dbF}(\Om;C([t,T];\dbR^n))\times L^1_{\dbF}(\Om; L^2(t,T;\dbR^{n}))\times L^2_{\dbF}(\Om;C([t,T];\dbR^n))$.
Then  the unique adapted solution $(Y^*,Z^*,X^*)$ of  FBSDE  \rf{Y-X-star-nou} can be given by
\bel{decoupling-main}
(Y^*,Z^*,X^*)=\big(-\Si X-\f,\, (I_n+\Si N)^{-1}(\L X+\Si C^\top X+\b),\, X\,\big),
\ee
and the unique optimal control $u^*$ of {\rm Problems (BLQ)} has the following  explicit representation:
\bel{decoupling-u}
u^*=R^{-1}B^\top X.
\ee
\end{theorem}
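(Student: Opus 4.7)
The strategy is to plug the explicit formulas
$$
Y^* := -\Si X - \f,\qquad Z^* := (I_n+\Si N)^{-1}(\L X + \Si C^\top X + \b),\qquad X^* := X
$$
into the FBSDE \rf{Y-X-star-nou} and verify that all drift, diffusion and boundary relations are satisfied. Uniqueness from \autoref{thm-solvability-FBSDE} then identifies these formulas with the unique adapted solution, whence \rf{decoupling-u} is immediate from \rf{u-X1}.

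Boundary conditions are straightforward: $Y^*(T) = -\Si(T)X(T) - \f(T) = \xi$ because $\Si(T) = 0$ and $\f(T) = -\xi$; and the initial value $X(t) = -(I_n+G_t\Si(t))^{-1}G_t\f(t)$ in \rf{BLQ-fSDE} rearranges to $(I_n+G_t\Si(t))X^*(t) = -G_t\f(t)$, i.e., to $X^*(t) = G_t Y^*(t)$. For the forward BSDE in \rf{Y-X-star-nou}, I would apply It\^o's product rule,
$$
dY^* = -d(\Si X) - d\f = -(d\Si)X - \Si\,dX - (d\Si)(dX) - d\f,
$$
and substitute the drift and diffusion coefficients of $(\Si,\L)$, $X$ and $(\f,\b)$ from \rf{BLQ-Riccati-Equation}, \rf{BLQ-fSDE}, \rf{BLQ-BSDE-unbounded}. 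Collecting the $dW$-coefficient and applying the identity $I_n - \Si N(I_n+\Si N)^{-1} = (I_n+\Si N)^{-1}$ once to the $(\L X + \Si C^\top X)$-factor and once to the $\b$-factor telescopes the diffusion to exactly $(I_n+\Si N)^{-1}(\L X + \Si C^\top X + \b) = Z^*$. Once this $Z^*$ is identified, the backward equation for $X^*$ in \rf{Y-X-star-nou} follows directly from \rf{BLQ-fSDE}: its drift $-(A^\top + Q\Si)X - Q\f$ equals $-A^\top X^* + Q(-\Si X - \f) = -A^\top X^* + QY^*$, and its diffusion coefficient equals $-C^\top X^* + NZ^*$ by the very formula for $Z^*$.

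The main obstacle is the drift check for $dY^*$: one must show that the substitution above produces precisely $AY^* + BR^{-1}B^\top X + CZ^*$. Roughly a dozen terms must collapse to three, and the decisive cancellations are the following: the $\Si A^\top X$, $\Si Q\Si X$ and $\Si Q\f$ pieces pair off between the Riccati drift and $-\Si\,dX$, $-d\f$; the $\L N(I_n+\Si N)^{-1}\L X$ pieces cancel between the Riccati drift and the cross-variation $(d\Si)(dX)$; the $-BR^{-1}B^\top$ entry of the Riccati drift supplies $BR^{-1}B^\top X$ directly; and the $\L$-$C$ mixed residuals vanish via the commutation identity $(I_n+\Si N)^{-1}\Si = \Si(I_n+N\Si)^{-1}$, which reduces the leftover $\L$-$C^\top X$ contribution to $\L\big[(I_n+N\Si)^{-1} - I_n + N\Si(I_n+N\Si)^{-1}\big]C^\top X \equiv 0$. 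After these cancellations the surviving pieces regroup into $A(-\Si X-\f) + BR^{-1}B^\top X + C(I_n+\Si N)^{-1}(\L X + \Si C^\top X + \b) = AY^* + BR^{-1}B^\top X + CZ^*$, as required. Integrability of the candidate triple is then routine from \ref{ass:H3} (which bounds $(I_n+\Si N)^{-1}$) together with the assumed regularity of $(\Si,\L)$, $(\f,\b)$ and $X$, placing $(Y^*, Z^*, X^*)$ in the product space required by \autoref{thm-solvability-FBSDE} and completing the verification.
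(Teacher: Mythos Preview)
Your verification of the dynamics and boundary conditions is correct and mirrors the paper's computation, including the key commutation identity $(I_n+\Si N)^{-1}\Si=\Si(I_n+N\Si)^{-1}$ that kills the mixed $\L$--$C^\top X$ residuals.

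There is, however, a genuine gap in your final sentence. You assert that $Z^*\in L^2_\dbF(t,T;\dbR^n)$ is ``routine'' once $(I_n+\Si N)^{-1}$ is bounded, but it is not: the hypotheses only give $\b\in L^1_\dbF(\Om;L^2(t,T;\dbR^n))$ and $\L\in L^2_\dbF(0,T;\dbS^n)$, $X\in L^2_\dbF(\Om;C)$, and neither $\b$ nor the product $\L X$ is known a~priori to lie in $L^2_\dbF(t,T;\dbR^n)$ (for $\L X$ one would need a fourth-moment bound you do not have). Boundedness of $(I_n+\Si N)^{-1}$ alone therefore does not place $Z^*$ in $L^2$, and without this you cannot invoke the uniqueness statement of \autoref{thm-solvability-FBSDE}.

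The paper closes this gap by a different use of \ref{ass:H3}: one writes $Z^*=N^{-1}\big[N(I_n+\Si N)^{-1}(\L X+\Si C^\top X+\b)\big]$ and observes that the bracketed quantity is, up to the bounded term $-C^\top X$, exactly the diffusion coefficient of the FSDE \rf{BLQ-fSDE} satisfied by $X$. Since $X\in L^2_\dbF(\Om;C([t,T];\dbR^n))$ and the drift of \rf{BLQ-fSDE} is bounded by $K(|X|+|\f|)$, a localization (stopping time) argument together with It\^o's isometry yields
\[
\dbE\int_t^T\big|N(I_n+\Si N)^{-1}(\L X+\Si C^\top X+\b)\big|^2ds
\les K\,\dbE\Big[\sup_{s}|X(s)|^2+\sup_{s}|\f(s)|^2\Big]<\i,
\]
and then the boundedness of $N^{-1}$ (this is where the lower bound $N\ges\d I_n$ in \ref{ass:H3} is essential) gives $Z^*\in L^2_\dbF(t,T;\dbR^n)$. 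You should insert this argument before appealing to uniqueness.
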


\begin{proof}
For convenience, we denote
\bel{proof-decoupling-hat}
(\h Y,\h Z,\h X)\deq\big(-\Si X-\f,\, (I_n+\Si N)^{-1}(\L X+\Si C^\top X+\b),\, X\,\big).
\ee
Note that
$$
\int_t^T\big|(I_n+\Si N)^{-1}(\L X+\Si C^\top X+\b)\big|^2ds<\i,\q\as
$$
Define for each $k > 1$ the stopping time (with the convention $\inf\emptyset=\i$)
$$
\t_k=\inf\left\{s\in[t,T];\,\int_t^s\big|(I_n+\Si N)^{-1}(\L X+\Si C^\top X+\b)\big|^2dr\ges k\right\}.
$$
By \ref{ass:H3}, we have
\begin{align}
\nn&\dbE\int_t^{\t_k\wedge T}\big|(I_n+\Si N)^{-1}(\L X+\Si C^\top X+\b)\big|^2ds\\
\nn&\q=\dbE\int_t^{\t_k\wedge T}\big|N^{-1}N(I_n+\Si N )^{-1}(\L X+\Si C^\top X+\b)\big|^2ds\\
\label{est-Z1}&\q\les K \dbE\int_t^{\t_k\wedge T}\big|N(I_n+\Si N)(\L X+\Si C^\top X+\b)\big|^2ds.
\end{align}
Recall that $X$ satisfies FSDE \rf{BLQ-fSDE}, we have by It\^{o}'s isometry that
\begin{align}
\nn&\dbE\int_t^{\t_k\wedge T}\big|N(I_n+\Si N)(\L X+\Si C^\top X+\b)\big|^2ds\\
\nn&\q\les K \dbE\big|G_t(I_n+\Si(t)G_t)^{-1}\f(t)\big|^2+ K \dbE|X(T\wedge\t_k)|^2\\
\nn&\hp{\les K}+K\dbE\int_t^{\t_k\wedge T}\[\big|(A^\top+Q\Si)X\big|^2+\big|Q\f\big|^2+\big|C^\top X\big|^2\] ds\\
\nn&\q\les K \dbE\big|G_t(I_n+\Si(t)G_t)^{-1}\f(t)\big|^2+K \dbE\Big[\sup_{s\in[t,T]}|X(s)|^2\Big]\\
\nn&\hp{\les K}+K\dbE\int_t^T\[\big|(A^\top+Q\Si)X\big|^2+\big|Q\f\big|^2+\big|C^\top X\big|^2 \] ds\\
\label{est-Z2}&\q\les K \dbE\Big[\sup_{s\in[t,T]}|X(s)|^2+\sup_{s\in[t,T]}|\f(s)|^2\Big].
\end{align}
Combining the above with \rf{est-Z1}, by the definition of $\h Z$, we have
\begin{align}
\dbE\int_t^{\t_k\wedge T}|\h Z|^2ds
\nn&=\dbE\int_t^{\t_k\wedge T}\big|(I_n+\Si N)^{-1}(\L X+\Si C^\top X+\b)\big|^2ds\\
\label{est-Z}&\les K \dbE\Big[\sup_{s\in[t,T]}|X(s)|^2+\sup_{s\in[t,T]}|\f(s)|^2\Big]<\i.
\end{align}
Since $\lim_{k\to\i}\t_k=\i$ almost surely
and the right-hand side of the above inequality is independent of $\t_k$,
we conclude $\h Z=(I_n+\Si N)(\L X+\Si C^\top X+\b)\in L_{\dbF}^2(t,T;\dbR^n)$ by letting $k\to\i$.
Then it is clear to see that
$(\h Y,\h Z,\h X)\in L^2_{\dbF}(\Om;C([t,T];\dbR^n))\times L^2_{\dbF}(t,T;\dbR^{n})
\times L^2_{\dbF}(\Om;C([t,T];\dbR^n))$.

\ms

By the uniqueness of the adapted solution to FBSDE \rf{Y-X-star-nou} from \autoref{thm-solvability-FBSDE},
it suffices to verify that $(\h Y,\h Z,\h X)$ satisfies the FBSDE \rf{Y-X-star-nou}.
By the definition \rf{proof-decoupling-hat} of $(\h Y,\h Z,\h X)$ and the equation \rf{BLQ-fSDE} of $X$,
$\h X$ satisfies
\begin{align}
\nn d \h X(s)&=\big\{-A^\top\h X-Q[\Si \h X+\f]\big\}ds\\
\nn &\hp{=}+\big\{-C^\top\h X+N[(I_n+\Si N)^{-1}(\L\h X+\Si C^\top\h X+\b)]\big\}dW(s)\\
\label{hat-X(s)}&=\big\{-A^\top\h X+Q\hat Y\big\}ds+\big\{-C^\top\h X+N\h Z\big\}dW(s),\q s\in[t,T].
\end{align}
Further, by the initial condition in \rf{BLQ-fSDE}, the initial value of $\h X$ is given by
\begin{align}
\nn\h X(t)&=X(t)=[I_n+G_t\Si(t)] X(t)-G_t\Si(t)X(t)\\
\nn&=-[I_n+G_t\Si(t)][I_n+G_t\Si(t)]^{-1}G_t\f(t)-G_t\Si(t)X(t)\\
\label{hat-X(t)}&=G_t[-\f(t)-\Si(t)X(t)]=G_t\h Y(t).
\end{align}
Thus  $\h X$ satisfies the same FSDE as $X^*$
with $(Y^*,Z^*)$ replaced by $(\h Y,\h Z)$.

\ms

We now show that $(\h Y,\h Z)$ satisfies the same BSDE as $(Y^*,Z^*)$ in \rf{Y-X-star-nou}
with $X^*$ replaced by $\h X$.
By applying It\^{o}'s formula to $s\mapsto \h Y(s)\equiv-\Si(s)X(s)-\f(s)$,
some  straightforward calculations yield that
\begin{align}
\nn d \h Y(s)&=d[-\Si(s)X(s)-\f(s)]\\
%
%
%
%
%
%
%
\nn&=\Big\{-A(\Si X+\f)+BR^{-1}B^{\top} X+C(I_n+\Si N)^{-1}(\L X+\Si C^\top X+\b)\\
\nn&\hp{=\Big\{}
+\L N (I_n+\Si N)^{-1}\Si C^\top X-\L C^\top X+\L (I_n+N\Si)^{-1}C^\top X\Big\}ds\\
\nn&\hp{=}
+\big\{\L X+\Si C^\top X+\b-\Si N(I_n+\Si N)^{-1}(\L X+\Si C^\top X+\b)\big\}dW(s)\\
\label{hat-Y(s)}&\equiv(I)ds+(II)dW(s).
\end{align}
%
%
%
%
%
%
Using the fact that
\bel{Si-Multi}
(I_n+\Si N)^{-1}\Si=\Si(I_n+N\Si )^{-1},\q N\Si(I_n+N\Si )^{-1}-I_n=-(I_n+N\Si )^{-1},
\ee
we have
$$
\begin{aligned}
&\L N (I_n+\Si N)^{-1}\Si C^\top X-\L C^\top X+\L (I_n+N\Si)^{-1}C^\top X\\
&\q=\L N \Si(I_n+N\Si )^{-1} C^\top X-\L C^\top X+\L (I_n+N\Si)^{-1}C^\top X\\
&\q=\L [N \Si(I_n+N\Si )^{-1} -I_n]C^\top X+\L (I_n+N\Si)^{-1}C^\top X\\
&\q=-\L (I_n+N\Si)^{-1}C^\top X+\L (I_n+N\Si)^{-1}C^\top X=0.
\end{aligned}
$$
Then by the definition \rf{proof-decoupling-hat} of $(\h X,\h Y,\h Z)$,
the drift term in \rf{hat-Y(s)} can be rewritten as
\begin{align}
\nn(I)&=-A(\Si X+\f)+BR^{-1}B^{\top} X+C(I_n+\Si N)^{-1}(\L X+\Si C^\top X+\b)\\
\label{(I)-final}&=A\h Y+ BR^{-1}B^\top\h X+C\h Z.
\end{align}
By the definition \rf{proof-decoupling-hat} of $\h Z$,
the diffusion term in \rf{hat-Y(s)} can be expressed as
\begin{align*}
(II)&=[I_n-\Si N(I_n+\Si N)^{-1}](\L X+\Si C^\top X+\b)\\
    &=(I_n+\Si N)^{-1}(\L X+\Si C^\top X+\b)=\h Z.
\end{align*}
Combining the above with \rf{(I)-final}, we can rewrite \rf{hat-Y(s)} as
$$
d\h Y(s)=\big\{A\h Y+ BR^{-1}B^\top\h X+C\h Z\big\}ds+\h ZdW(s).
$$
Moreover, by the terminal values of $\Si$ and $\f$,
$\h Y$ satisfies the following terminal condition:
$$
\h Y(T)=-\Si(T) X(T)-\f(T)=\xi.
$$
It follows that $(\h Y,\h Z)$  satisfies the same BSDE as $(Y^*,Z^*)$  with $X^*$ replaced by $\h X$.
The proof is thus completed.
\end{proof}

To illustrates the procedure for finding the optimal control by \autoref{decoupling},
we conclude this section by presenting the following simple example.

\begin{example}\rm
Consider the one-dimensional controlled BSDE:
\bel{state-example}\left\{\begin{aligned}
   dY(s) &={1\over 1+W(s)^2} u(s)ds  + Z(s)dW(s),\q s\in[0,T], \\
    Y(T) &= 1,
\end{aligned}\right.\ee
and the cost functional:
\begin{align}\label{cost-example}
J(0,\xi;u) &= \dbE\int_0^T\bigg[{2+W(s)^2\over 1+W(s)^2} |Z(s)|^2  +{2+W(s)^2\over 1+W(s)^2} |u(s)|^2 \bigg]ds.
\end{align}
Notice that
$$0<{1\over 1+W(s)^2}\les 1, \q 1<{2+W(s)^2\over 1+W(s)^2}\les 2,\q s\in[0,T].$$
Thus, the example satisfies  the assumptions \ref{ass:H1}--\ref{ass:H2}--\ref{ass:H3}.
In the following, we are applying \autoref{decoupling} to obtain the optimal control.
The corresponding Riccati equation \rf{BLQ-Riccati-Equation}, BSDE \rf{BLQ-BSDE-unbounded}, SDE \rf{BLQ-fSDE} read:
\bel{BLQ-Riccati-Equation-example}
\left\{\begin{aligned}
d\Si(s)&=\[-{1\over [1+W(s)^2] [2+W(s)^2]}+\L {2+W(s)^2\over 1+W(s)^2}\(I_n+\Si{2+W(s)^2\over 1+W(s)^2}\)^{-1}\L\]ds\\
       &\hp{=\,}-\L dW(s),\q s\in[0,T],\\
 \Si(T)&=0,
\end{aligned}\right.
\ee
\bel{BLQ-BSDE-example}
\left\{\begin{aligned}
d\f(s)&=\L {2+W(s)^2\over 1+W(s)^2}\(I_n+\Si{2+W(s)^2\over 1+W(s)^2}\)^{-1}\b ds-\b dW(s),\qq s\in[t,T], \\
 \f(T)&=-1,
\end{aligned}\right.
\ee
and
\bel{BLQ-fSDE-example}
\left\{
\begin{aligned}
d X(s)&=\bigg\{{2+W(s)^2\over 1+W(s)^2}\(I_n+\Si{2+W(s)^2\over 1+W(s)^2}\)^{-1}\L X\\
&\hp{=-\big\{}+{2+W(s)^2\over 1+W(s)^2}\(I_n+\Si{2+W(s)^2\over 1+W(s)^2}\)^{-1}\b\bigg\}dW(s),\qq s\in[t,T], \\
X(t)&=0.
\end{aligned}\right.
\ee
Note that $(-1,0,0)$ is the unique solution to the decoupled system of BSDE \rf{BLQ-BSDE-example} and SDE \rf{BLQ-fSDE-example}.
By the formula \rf{decoupling-u} in \autoref{decoupling}, the unique optimal control $u^*$ is given by
\bel{decoupling-u-example}
u^*(s)=R(s)^{-1}B(s)^\top X(s)\equiv 0,\q s\in[0,T].
\ee

\end{example}

\section{Solvability of  Riccati equation, BSDE and FSDE with unbounded coefficients}\label{sec5}

If the solutions $(\Si,\L,\f,\b,X)$ of \rf{BLQ-Riccati-Equation}--\rf{BLQ-BSDE-unbounded}--\rf{BLQ-fSDE} are solved,
then it immediately follows from \autoref{decoupling} that FBSDE \rf{Y-X-star-nou} can be decoupled
and the unique optimal control of Problem (BLQ) can be represented explicitly.
In this subsection, we shall establish the solvability of Riccati equation \rf{BLQ-Riccati-Equation},
BSDE \rf{BLQ-BSDE-unbounded}, and FSDE \rf{BLQ-fSDE}.

\ms
We begin with two interesting results of the optimal control problems for forward SDEs,
which will play a basic role in our subsequent analysis.
For any given $\e>0$, consider the following stochastic Riccati equation:
\bel{Riccati-Pe}
\left\{
\begin{aligned}
dP_\e(s)&=-\Big\{P_\e A+A^T P_\e+Q-\big[P_\e(B,C)+(0,\Pi_\e)\big]\\
        &\hp{=-\big\{}\times\begin{pmatrix}R & 0 \\ 0 & N+P_\e\end{pmatrix}^{-1}
        \big[(B,C)^\top P_\e+(0,\Pi_\e)^\top\big]\Bigg\}+\Pi_\e dW(s),\q s\in[0,T],\\
P_\e(T)&=\e^{-1} I_n.
\end{aligned}
\right.
\ee
Note that \rf{Riccati-Pe} is the Riccati equation associated the following forward LQ problems:
For the given $\e>0$, consider the following controlled FSDE:
\bel{state-SDE-e}\left\{\begin{aligned}
   dX(s) &=\big\{A(s)X(s)+ B(s)u(s)+ C(s)v(s)\big\}ds +v(s)dW(s),\q s\in[t,T],\\
     X(t)&= \eta,
\end{aligned}\right.\ee
and the cost functional:
\begin{align}
\nn J_\e(t,\eta;u,v)&= \dbE_t\bigg\{\int_t^T\[\lan Q(s)X(s),X(s)\ran
+\lan N(s)v(s),v(s)\ran+\lan R(s)u(s),u(s)\ran \]ds\\
\label{cost-X-e}&\hp{= \dbE_t\bigg\{}
+\e^{-1}\lan X(T),X(T)\ran \bigg\}.
\end{align}
 For any given $(t,\eta)\in[0,T]\times L^2_{\cF_t}(\Om;\dbR^n)$,
find a $(u^*,v^*)\in L_{\dbF}^2(t,T;\dbR^m)\times L_{\dbF}^2(t,T;\dbR^n)$ such that
$$
J_\e(t,\eta; u^*,v^*)=\essinf_{u,v}J_\e(t,\eta;u,v)=V_\e(t,\eta),
$$
as $(u,v)$ ranges over the space $L_{\dbF}^2(t,T;\dbR^m)\times L_{\dbF}^2(t,T;\dbR^n)$.
Under \ref{ass:H1}--\ref{ass:H2}--\ref{ass:H3}, we have
$$
\begin{pmatrix}R & 0 \\ 0 & N\end{pmatrix}\ges\d I_{m+n},\q Q\ges 0,\q \hbox{and}\q \e^{-1}\ges 0.
$$
Thus the above SLQ problems satisfies the so-called {\it standard condition} in the literature \cite{Yong-Zhou 1999,Tang 2003,Sun-Xiong-Yong 2018}.
Then by \cite[Theorem 6.2.]{Sun-Xiong-Yong 2018} (or \cite[Theorem 5.3]{Tang 2003}),
Riccati equation \rf{Riccati-Pe} admits a unique solution
$(P_\e,\Pi_\e)\in L^\i_{\dbF}(\Om;C([0,T];\dbS^n_+))\times L^2_{\dbF}(0,T;\dbS^n)$.
The following result shows that $P_\e$ is uniformly positive definite (for the given $\e>0$).

\begin{proposition}\label{P-e-UP}
Let {\rm\ref{ass:H1}--\ref{ass:H2}--\ref{ass:H3}} hold.
Then for any given $\e>0$, $P_\e$ is uniformly positive definite; that is
\bel{P-e-UP-main}
P_\e(s)\ges \a_\e I_n,\q \as,\,\, s\in[0,T],
\ee
for some  $\a_\e>0$.
\end{proposition}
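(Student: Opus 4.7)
The plan is to exploit the standard representation $V_\e(t,\eta)=\langle P_\e(t)\eta,\eta\rangle$ a.s.\ for every $\eta\in L^2_{\cF_t}(\Om;\dbR^n)$, which is available under \ref{ass:H1}--\ref{ass:H2}--\ref{ass:H3} (see e.g.\ \cite{Sun-Xiong-Yong 2018}), and to produce a deterministic $\a_\e>0$ such that $V_\e(t,\eta)\ges\a_\e|\eta|^2$ a.s.\ for every $t\in[0,T]$ and every \emph{deterministic} $\eta\in\dbR^n$. The conclusion $P_\e(s)\ges\a_\e I_n$ for all $s\in[0,T]$ a.s.\ then follows by applying this bound to a countable dense $\{\eta_j\}\subset\dbR^n$ and invoking the continuity of $s\mapsto P_\e(s)$.

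For any admissible $(u,v)$ and its state $X$, set $M\deq\dbE_t\!\int_t^T(|u|^2+|v|^2)\,dr$ and $W\deq\dbE_t|X(T)|^2$. Since $Q\ges 0$, $N\ges\d I_n$, and $R\ges\d I_m$ by \ref{ass:H2},
\begin{equation*}
J_\e(t,\eta;u,v)\ges\d M+\e^{-1}W.
\end{equation*}
The heart of the proof is the matching upper bound $|\eta|^2\les C_T(M+W)$ with $C_T>0$ depending only on $T$ and on the $L^\i$ bounds of $A,B,C$ (in particular, independent of $\e$ and of the chosen control). Granted this,
\begin{equation*}
\d M+\e^{-1}W\ges\min(\d,\e^{-1})(M+W)\ges C_T^{-1}\min(\d,\e^{-1})\,|\eta|^2,
\end{equation*}
and taking $\essinf$ over $(u,v)$ yields $V_\e(t,\eta)\ges\a_\e|\eta|^2$ with $\a_\e\deq C_T^{-1}\min(\d,\e^{-1})>0$.

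To prove $|\eta|^2\les C_T(M+W)$, I would fix a partition $t=s_0<s_1<\cds<s_N=T$ of mesh $h>0$. On each subinterval $[s_k,s_{k+1}]$, applying It\^o's formula to $|X|^2$, taking $\dbE_{s_k}$ to remove the martingale part, discarding the nonnegative $|v|^2$ running term, and applying Young's inequality together with the $L^\i$ bounds on $A,B,C$ yield
\begin{equation*}
|X(s_k)|^2\les\dbE_{s_k}|X(s_{k+1})|^2+K_0\,\dbE_{s_k}\!\int_{s_k}^{s_{k+1}}(|X|^2+|u|^2+|v|^2)\,dr.
\end{equation*}
Setting $M_k\deq\dbE_{s_k}\!\int_{s_k}^{s_{k+1}}(|u|^2+|v|^2)\,dr$, one absorbs the $|X|^2$-integral by the standard Gr\"onwall-type bound $\sup_{r\in[s_k,s_{k+1}]}\dbE_{s_k}|X(r)|^2\les e^{Kh}(|X(s_k)|^2+KM_k)$, and choosing $h$ small enough that the resulting self-absorption constant is at most $\tfrac12$ produces the local estimate $|X(s_k)|^2\les 2\dbE_{s_k}|X(s_{k+1})|^2+K_1 M_k$. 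Taking $\dbE_t$ and iterating from $k=N-1$ down to $k=0$ via the tower property gives $|\eta|^2\les 2^N W+2^N K_1 M\les C_T(M+W)$ with $C_T\deq 2^N(1+K_1)$.

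The main obstacle is precisely what the partitioning circumvents: a one-shot It\^o--Gr\"onwall estimate on the full interval $[t,T]$ produces a self-absorption factor of order $Te^{KT}$, which cannot be forced below $1/2$ for arbitrary $T$. Splitting $[t,T]$ into sufficiently short pieces restores absorption at the cost of an exponential-in-$N$ (but deterministic and $\e$-independent) growth of $C_T$; since only the positivity of $\a_\e$ is required, this is harmless.
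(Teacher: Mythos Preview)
Your argument is correct, but it takes a different and noticeably heavier route than the paper's. Both proofs ultimately rest on the same inequality $|\eta|^2\les K\big(\dbE_t|X(T)|^2+\dbE_t\!\int_t^T|u|^2\,dr\big)$ (or your variant with $|v|^2$ included), combined with the obvious lower bound $J_\e\ges(\e^{-1}\wedge\d)(W+M)$. The difference is how this key estimate is obtained.

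The paper observes that the controlled forward equation $dX=(AX+Bu+Cv)\,ds+v\,dW$, read with terminal datum $X(T)$, is \emph{exactly} the BSDE of the form \rf{state} with $(Y,Z)=(X,v)$ and $u$ as the inhomogeneity. The conditional estimate \rf{well-BSDE-Et} then gives $|\eta|^2=|X(t)|^2\les\sup_{s}\dbE_t|X(s)|^2\les K\dbE_t\big[|X(T)|^2+\int_t^T|u|^2\big]$ in one line, with no partition, no iteration, and no $v$-term on the right-hand side (since $v$ has been absorbed as the $Z$-component). With this in hand the paper finishes by a short contradiction argument, which is equivalent to your direct lower bound on $V_\e$.

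Your partition-and-iterate scheme is a valid substitute, but the obstacle you identify---that a one-shot It\^o--Gr\"onwall estimate cannot close---is an artifact of using the \emph{forward} Gr\"onwall bound $\sup_r\dbE_t|X(r)|^2\les e^{KT}(|\eta|^2+KM)$, which reintroduces $|\eta|^2$ on the right. If instead one applies a \emph{backward} Gr\"onwall to $s\mapsto\dbE_t|X(s)|^2$ (precisely the content of \rf{well-BSDE-Et}), the self-absorption issue disappears and the estimate closes on the whole interval. So your partition buys self-containment at the cost of an exponential constant $2^N$, whereas the paper's BSDE trick buys a cleaner constant and a sharper bound (no $|v|^2$) by reusing \autoref{lmm:well-posedness-BSDE}. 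Your density-plus-continuity step to pass from deterministic $\eta$ to $P_\e(s)\ges\a_\e I_n$ for all $s$ a.s.\ is fine; note, however, that your core inequality already holds for arbitrary $\eta\in L^2_{\cF_t}(\Om;\dbR^n)$, so the restriction to deterministic $\eta$ is not actually needed.
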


\begin{proof}
For the given $\e>0$,  by \ref{ass:H2}--\ref{ass:H3}, the cost functional $J_\e$ \rf{cost-X-e} satisfies
\begin{align}
\nn J_\e(t,\eta;u,v)\ges (\e^{-1}\wedge\d)\dbE_t\bigg\{ |X(T)|^2 +\int_t^T\[|u(s)|^2+|v(s)|^2\]ds\bigg\},\\
\label{cost-X-e-u-X}
\forall (t,\eta)\in[0,T]\times L^2_{\cF_t}(\Om;\dbR^n),\,(u,v)\in L_{\dbF}^2(t,T;\dbR^m)\times L_{\dbF}^2(t,T;\dbR^n).
\end{align}
We now prove the existence of $\a_\e$ by contradiction.
If not, then for any $\a>0$,
there exist a $t\in[0,T)$ and an $\Om_t\in\cF_t$ with $\dbP(\Om_t)>0$ such that
\bel{P-e-t}
\si_{\min}(P_\e(t,\om))\les \a ,\q \as\,\,\om\in\Om_t,
\ee
where $\si_{\min}(P_\e(t,\om))$ stands for the minimal eigenvalue of the symmetric matrix $P_\e(t,\om)$.
Then we can find an $\cF_t$-measurable random vector $\eta_\a$ with $|\eta_\a|=1$ such that
\bel{P-e-t-eta}
\lan P_\e(t,\om)\eta_\a (\om),\eta_\a(\om)\ran=\si_{\min}(P_\e(t,\om))\les \a,\q \as\,\,\om\in\Om_t.
\ee
For the fixed $(t,\eta_\a)$, by \cite[Corollary 5.7 and Theorem 6.7]{Sun-Xiong-Yong 2018},
there exists a control $(u^{*}_\a,v^{*}_\a)\in L_{\dbF}^2(t,T;\dbR^m)\times L_{\dbF}^2(t,T;\dbR^n)$
such that
\bel{P-e-t-eta-u}
\lan P_\e(t)\eta_\a ,\eta_\a\ran=V_\e(t,\eta_\a)=J_\e(t,\eta_\a;u^{*}_\a,v^{*}_\a),\q \as
\ee
Combining  \rf{P-e-t-eta-u} with \rf{cost-X-e-u-X}, we get
\begin{align}
\nn&\dbE_t\bigg\{|X^{*}_\a(T)|^2 +\int_t^T\[|u^{*}_\a(s)|^2+|v^{*}_\a(s)|^2\]ds\bigg\}\\
\label{x-u-esti1}&\q \les {1\over (\e^{-1}\wedge\d)} J_\e(t,\eta_\a;u^{*}_\a,v^{*}_\a)
={1\over (\e^{-1}\wedge\d)}\lan P_\e(t)\eta_\a ,\eta_\a\ran,
\end{align}
where $X^*_\a$ is the solution of \rf{state-SDE-e} corresponding to $\eta_\a$ and $(u^{*}_\a,v^{*}_\a)$; that is
\bel{state-SDE-e1}\left\{\begin{aligned}
   dX_\a^*(s) &=\big\{A(s)X^*_\a(s)+ B(s)u_\a^*(s)+ C(s)v_\a^*(s)\big\}ds +v_\a^*(s)dW(s),\q s\in[t,T],\\
     X_\a^*(t)&= \eta_\a.
\end{aligned}\right.\ee
The inequality \rf{x-u-esti1}, together with \rf{P-e-t-eta}, implies that
\bel{x-u-esti}
\dbE_t\bigg\{|X^*_\a(T)|^2 +\int_t^T\[|u^{*}_\a(s)|^2+|v^{*}_\a(s)|^2\]ds\bigg\}
\les {1\over (\e^{-1}\wedge\d)}\a,\qq \as ~~\hbox{on}~~ \Om_t.
\ee
Moreover, from \rf{state-SDE-e1} and the fact $X^*_\a(T)=X^*_\a(T)$, we see that $(X^*_\a,v^{*}_\a)$ also satisfies the following BSDE
(with $(Y^*_\a,Z^*_\a)$ being unknown variables):
\bel{state-SDE-e-BSDE}\left\{\begin{aligned}
   dY^*_\a(s)&=\big\{A(s)Y^*_\a(s)+ B(s)u ^{*}_\a(s)+ C(s)Z^*_\a(s)\big\}ds\\
             &\hp{=}+Z^*_\a(s)dW(s),\qq\q s\in[t,T],\\
     Y^*_\a(T)&= X^*_\a(T).
\end{aligned}\right.\ee
Then, by the estimate \rf{well-BSDE-Et} in \autoref{lmm:well-posedness-BSDE},
there exists a constant $K>0$,  independent of $\a$ such that
\bel{Y-BSDE-X(T)-u}
\sup_{s\in[t,T]}\dbE_t|X^*_\a(s)|^2
=\sup_{s\in[t,T]}\dbE_t|Y^*_\a(s)|^2
\les K\dbE_t\bigg[|X^*_\a(T)|^2+\int_t^T|u^{*}_\a(s)|^2ds\bigg].
\ee
Using \rf{Y-BSDE-X(T)-u}--\rf{x-u-esti} and the fact that $|X^{*}_\a(t)|^2=|\eta_\a|^2=1$,
we get
\begin{align}
\nn 1&=|\eta_\a|^2=|X^{*}_\a(t)|^2\les \sup_{s\in[t,T]}\dbE_t|X^*_\a(s)|^2\\
\label{UP-Final1}&\les K\dbE_t\bigg[|X^*_\a(T)|^2+\int_t^T|u^{*}_\a(s)|^2ds\bigg]
\les {K\over (\e^{-1}\wedge\d)}\a,\qq \as \q\hbox{on}\q \Om_t,
\end{align}
which implies that
\bel{UP-Final}
1\les {K\over (\e^{-1}\wedge\d)}\a.
\ee
By taking  a small enough $\a>0$ such that ${K\over (\e^{-1}\wedge\d)}\a<1$,
we get the contradiction immediately.
\end{proof}

For any $\e\ges 0$, we consider the following controlled FSDE:
\bel{state-ti-X}\left\{\begin{aligned}
   dX_\e(s) &=\ti A_\e(s)X_\e(s)ds+ u(s) dW(s),\q s\in[t,T],\\
     X_\e(t)&= \eta,
\end{aligned}\right.\ee
and cost functional:
\begin{align}
\nn \ti J_\e(t,\eta;u)&=\dbE \int_t^T\[\lan\ti Q_\e(s)X_\e(s),X_\e(s)\ran+2\lan \ti S(s)X_\e(s),u(s)\ran
                                                                      +\lan \ti R(s)u(s),u(s)\ran\]ds\\
\label{cost-ti-X}     &\hp{=}+\dbE\lan \ti G_\e X_\e(T),X_\e(T)\ran.
\end{align}
We introduce the following assumption of the coefficients in  \rf{state-ti-X} and weighting matrices in \rf{cost-ti-X}.

\begin{taggedassumption}{(H4)}\label{ass:H4}
For any $\e\ges0$, the coefficient $\ti A_\e$ and weighting matrices $\ti Q_\e,\ti S,\ti R, \ti G_\e$ satisfy:
$$
\begin{aligned}
&\ti A_\e \in L^\i_{\dbF}(0,T;\dbR^{n\times n}),\q \ti Q_\e \in L^\i_{\dbF}(0,T;\dbS_+^{n}),
                                             \q \ti S \in L^\i_{\dbF}(0,T;\dbR^{m\times n}),\\
& \ti R \in L^\i_{\dbF}(0,T;\dbS_+^{m}), \q \ti G_\e \in L^\i_{\cF_T}(\Om;\dbS_+^{n}),
                                            \q \ti Q_\e-\ti S^\top \ti R^{-1}\ti S\ges 0.
\end{aligned}
$$
Moreover,  there exist two constants $\d,K>0$, independent of $\e$ such that
$$
|\ti A_\e(s)|+|\ti Q_\e(s)|+|\ti G_\e|\les K,\q \ti R(s)\ges\d I_m,
\q\as,\,\,\ae\, s\in[0,T].
$$
\end{taggedassumption}

\ms

With the state equation \rf{state-ti-X} and cost functional \rf{cost-ti-X}, we consider the following  LQ problem:

\ms

\textbf{Problem (SLQ$_\e$).}
For any given $\e\ges 0$ and $(t,\eta)\in[0,T]\times L^2_{\cF_t}(\Om;\dbR^n)$,
find a $u_\e^*\in\cU[t,T]$ such that
\bel{def-V-e}
\ti J_\e(t,\eta; u_\e^*)=\inf_{u\in\cU[t,T]}\ti J_\e(t,\eta;u)=\ti V_\e(t,\eta).
\ee

\ms

The following result is concerned with the stability of the value functions $\{\ti V_\e\}_{\e\ges 0}$.

\begin{proposition}\label{stability-value-function-SLQ}
Let {\rm\ref{ass:H4}} hold.
Suppose that
%
\bel{Prop-VS-1}
\lim_{\e\to 0^+}
\[\big|\ti A_\e(s)-\ti A_0(s)\big|+\big|\ti Q_\e(s)-\ti Q_0(s)\big|+\big|\ti G_\e-\ti G_0\big|\]=0,
\q\as,\,\ae\,s\in[0,T].
\ee
Then for any $(t,\eta)\in[0,T]\times L^2_{\cF_t}(\Om;\dbR^n)$,
the following convergence holds:
\bel{Prop-VS-2}
\lim_{\e\to0^+}\ti V_\e(t,\eta)=\ti V_0(t,\eta).
\ee
\end{proposition}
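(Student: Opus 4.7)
I plan to prove the two inequalities $\limsup_{\e\to 0^+}\ti V_\e(t,\eta)\les\ti V_0(t,\eta)$ and $\liminf_{\e\to 0^+}\ti V_\e(t,\eta)\ges\ti V_0(t,\eta)$ separately. For the upper bound, fix any test control $u\in\cU[t,T]$; since $\ti V_\e(t,\eta)\les\ti J_\e(t,\eta;u)$, it suffices to show $\ti J_\e(t,\eta;u)\to\ti J_0(t,\eta;u)$. The state trajectories $X_\e^u$ and $X_0^u$ share the same diffusion $u\,dW$ and initial state $\eta$, differing only in their drifts, so a pathwise Gronwall estimate on $X_\e^u-X_0^u$ yields
$$\dbE\sup_{s\in[t,T]}|X_\e^u(s)-X_0^u(s)|^2\les K\,\dbE\int_t^T|\ti A_\e(s)-\ti A_0(s)|^2|X_0^u(s)|^2\,ds.$$
Since $X_0^u$ is independent of $\e$ and $|\ti A_\e-\ti A_0|\les 2K$ by \ref{ass:H4}, dominated convergence together with \rf{Prop-VS-1} forces the right-hand side to $0$. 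Analogous DCT arguments on the $\ti Q_\e,\ti G_\e$ terms of $\ti J_\e$ and a Cauchy-Schwarz estimate on the $\ti S$ cross term give $\ti J_\e(t,\eta;u)\to\ti J_0(t,\eta;u)$, and infimizing over $u$ yields the upper bound.

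For the lower bound, let $u_\e^*$ be the unique optimal control of (SLQ$_\e$). Completing the square via $\ti Q_\e-\ti S^\top\ti R^{-1}\ti S\ges 0$ and $\ti R\ges\d I_m$ gives the structural inequality $\ti J_\e(t,\eta;u)\ges\d\,\dbE\int_t^T|u+\ti R^{-1}\ti S X_\e^u|^2\,ds$; combined with the a priori bound $\ti V_\e\les\ti J_\e(t,\eta;0)\les K\dbE|\eta|^2$ (from Gronwall on the pathwise ODE $dX_\e^0=\ti A_\e X_\e^0\,ds$) and a standard linear-SDE estimate on the closed-loop state, this produces a uniform bound $\sup_{\e>0}\dbE\int_t^T|u_\e^*(s)|^2\,ds\les K\dbE|\eta|^2$. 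Extracting a weakly convergent subsequence $u_{\e_k}^*\rightharpoonup u^*$ in $L^2_{\dbF}(t,T;\dbR^m)$ and invoking the weak lower-semicontinuity of the convex continuous functional $\ti J_0(t,\eta;\cd)$, I obtain $\ti V_0(t,\eta)\les\ti J_0(t,\eta;u^*)\les\liminf_k\ti J_0(t,\eta;u_{\e_k}^*)$; hence the proof reduces to showing the coefficient-perturbation error $\ti J_{\e_k}(t,\eta;u_{\e_k}^*)-\ti J_0(t,\eta;u_{\e_k}^*)\to 0$.

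This last step is where I expect the main difficulty. By the same Gronwall argument as in the upper bound, it reduces to
$$\dbE\int_t^T|\ti A_{\e_k}(s)-\ti A_0(s)|^2|X_0^{u_{\e_k}^*}(s)|^2\,ds\to 0,$$
plus analogous terms for $\ti Q_{\e_k}-\ti Q_0$ and $\ti G_{\e_k}-\ti G_0$. Unlike in the upper bound, the state $X_0^{u_{\e_k}^*}$ now depends on $k$ through the varying optimal control, so a direct application of DCT fails: $L^2$-boundedness of a sequence is not enough to absorb a bounded pointwise-vanishing factor. My plan is a Vitali-type argument: since $|\ti A_{\e_k}-\ti A_0|^2\to 0$ boundedly a.s., a.e.\ and $\{X_0^{u_{\e_k}^*}\}_k$ is $L^2$-bounded, the integral vanishes provided $\{|X_0^{u_{\e_k}^*}|^2\}_k$ is $\dbP\otimes ds$-uniformly integrable. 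To upgrade from $L^2$-boundedness to uniform integrability, I would exploit the feedback representation $u_{\e_k}^*=\Theta_{\e_k}X_{\e_k}^*$ coming from the stochastic Riccati equation for (SLQ$_{\e_k}$) (with $\Theta_{\e_k}$ uniformly bounded thanks to the uniform bounds in \ref{ass:H4}), and then apply an $L^p$-estimate ($p>1$) on the resulting closed-loop linear SDE to gain a higher moment of the state; if necessary, one first reduces to bounded $\eta$ using the $L^2$-Lipschitz continuity $|\ti V_\e(t,\eta)-\ti V_\e(t,\eta')|\les K(\|\eta\|_{L^2}+\|\eta'\|_{L^2})\|\eta-\eta'\|_{L^2}$ (uniform in $\e$) and a diagonal argument.
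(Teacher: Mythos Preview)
Your upper-bound half (fix a control, use Gronwall plus DCT) is correct.

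The lower-bound half has a genuine gap precisely where you anticipate trouble. Your plan to upgrade the $L^2$-bound on $\{u_{\e_k}^*\}$ to uniform integrability of $\{|X_0^{u_{\e_k}^*}|^2\}$ rests on the claim that the closed-loop feedback $\Theta_{\e_k}$ coming from the stochastic Riccati equation of (SLQ$_{\e_k}$) is uniformly bounded. In the random-coefficient setting this is false: for the state $dX_\e=\ti A_\e X_\e\,ds+u\,dW$, the optimal feedback is $\Theta_\e=-(\ti R+\ti P_\e)^{-1}(\ti S+\ti\Pi_\e)$, where $(\ti P_\e,\ti\Pi_\e)$ solves the Riccati BSDE. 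While $\ti P_\e$ is uniformly bounded, the martingale part $\ti\Pi_\e$ is only in $L^2_\dbF(0,T;\dbS^n)$, not in $L^\infty$, so $\Theta_\e$ is in general unbounded and no $L^p$ ($p>2$) estimate on the closed-loop state is available. Without that, mere $L^1$-boundedness of $\{|X_0^{u_{\e_k}^*}|^2\}$ does \emph{not} force $\dbE\int_t^T|\ti A_{\e_k}-\ti A_0|^2|X_0^{u_{\e_k}^*}|^2\,ds\to0$ (think of $g_k=k\mathbf{1}_{[0,1/k]}$ against $f_k=\mathbf{1}_{[0,1/k]}$). The reduction to bounded $\eta$ does not help, since the obstruction is the lack of higher moments for the controls, not for the initial datum.

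The paper avoids compactness arguments entirely. It represents the cost as $\ti J_\e(t,\eta;u)=\lan\cN_{t,\e}u,u\ran+2\lan\cL_{t,\e}\eta,u\ran+\dbE\lan M_\e(t)\eta,\eta\ran$ via adjoint FBSDEs, proves the same uniform coercivity $\cN_{t,\e}\ges\d\g I$ that you obtain (hence $\|\cN_{t,\e}^{-1}\|\les(\d\g)^{-1}$ uniformly), and then uses the closed formula $\ti V_\e(t,\eta)=-\lan\cN_{t,\e}^{-1}\cL_{t,\e}\eta,\cL_{t,\e}\eta\ran+\dbE\lan M_\e(t)\eta,\eta\ran$. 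Convergence of $\ti V_\e$ then reduces to strong-operator convergence of $\cL_{t,\e}$ and $\cN_{t,\e}^{-1}$ on each fixed vector; since the inputs to the relevant SDE/BSDE stability estimates are now $\e$-independent, these follow from exactly the DCT argument that works cleanly in your upper bound. If you want to salvage your variational route, the missing ingredient is strong $L^2$-convergence $u_{\e}^*\to u_0^*$, and the most direct way to get it is again through this operator representation.
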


\begin{proof}
Let $(\bar X_\e,\bar Y_\e,\bar Z_\e)$ and  $(\h X_\e,\h Y_\e,\h Z_\e)$ be
the adapted solutions to the decoupled linear FBSDEs
\bel{bar-X}\left\{\begin{aligned}
&d\bar X_\e(s)=\ti A_\e(s)\bar X_\e(s)ds+ u(s) dW(s),\\
&d\bar Y_\e(s)=-\big\{\ti A_\e(s)^\top\bar Y_\e(s)+\ti Q_\e(s)\bar X_\e(s)+\ti S(s)^\top u(s)\big\}ds
                                                                          +\bar Z_\e(s)dW(s),\\
& \bar X_\e(t)=0,\qq \bar Y_\e(T)= \ti G_\e \bar X_\e(T),
\end{aligned}\right.\ee
and
\bel{hat-X}\left\{\begin{aligned}
&d\h X_\e(s)=\ti A_\e(s)\h X_\e(s)ds,\\
&d\h Y_\e(s)= -\big\{\ti A_\e(s)^\top\h Y_\e(s)+\ti Q_\e(s)\h X_\e(s)\big\}ds+\h Z_\e(s)dW(s),\\
&\h X_\e(t)=\eta,\qq \h Y_\e(T)= \ti G_\e \h X_\e(T),
\end{aligned}\right.\ee
respectively.
Note that $(\bar X_\e,\bar Y_\e,\bar Z_\e)$ (respectively, $(\h X_\e,\h Y_\e,\h Z_\e)$)
depends linearly on $u$ (respectively, $\eta$).
We define two linear operators $\cN_{t,\e}:\cU[t,T]\to\cU[t,T]$
and $\cL_{t,\e}:L^2_{\cF_t}(\Om;\dbR^n)\to\cU[t,T]$ as follows:
\begin{align}
\label{def-cN}
&[\cN_{t,\e} u](s)=\bar Z_\e(s)+ \ti S(s)\bar X_\e(s) +\ti R(s)u(s),
\q s\in[t,T], \q\forall\, u\in\cU[t,T];\\
\label{def-cL}
&[\cL_{t,\e} \eta](s)=\h Z_\e(s)+ \ti S(s)\h X_\e(s),
\qq\qq\q s\in[t,T], \q\forall\, \eta\in L^2_{\cF_t}(\Om;\dbR^n).
\end{align}
Since $\ti A_\e,\ti Q_\e, \ti G_\e$ are uniformly bounded by \ref{ass:H4},
using the standard estimates of FSDEs (\cite[Theorem 6.16, Chapter 1]{Yong-Zhou 1999})
and BSDEs (\autoref{lmm:well-posedness-BSDE}),  it is clear to see that
$$
\|\cN_{t,\e} u\|^2\les K \dbE\int_t^T\[|\bar X_\e(s)|^2+|\bar Z_\e(s)|^2+|u(s)|^2\]ds
\les K \dbE\int_t^T|u(s)|^2 ds,\q  \forall u\in\cU[t,T],
$$
where $K$ is a constant independent of $\e$ and $u$.
Thus the linear operator $\cN_{t,\e}$ is  uniformly bounded with respect to $\e$.
Similarly, we can get the uniform boundedness of $\cL_{t,\e}$;
that is
\bel{uni-bounded-L}
\|\cL_{t,\e} \eta\|^2\les K \dbE|\eta|^2,\q  \forall \eta\in L_{\cF_t}^2(\Om;\dbR^n).
\ee
Let $(M_\e,H_\e)$ be the adapted solution to the following BSDE
\bel{BSDE-L-M}\left\{
\begin{aligned}
dM_\e(s)& = -\big\{M_\e(s)\ti A_\e(s)+\ti A_\e(s)^\top M_\e(s)+\ti Q_\e(s)\big\}ds
            +H_\e(s)dW(s),\\
M_\e(T)&=\ti G_\e.
\end{aligned}\right.
\ee
By \cite[Theorem 3.4]{Sun-Xiong-Yong 2018},
the cost functional $\ti J_\e(t,\eta;u)$ admits the following representation:
\bel{ti-J-rep}
\ti J_\e(t,\eta;u)=\lan \cN_{t,\e} u,u\ran+2\lan\cL_{t,\e} \eta,u\ran+\dbE\lan M_\e(t)\eta,\eta\ran.
\ee
By \ref{ass:H4}--\rf{Prop-VS-1}, \autoref{lmm:well-posedness-BSDE}, and dominated convergence theorem, we have
\begin{align}
\nn&\lim_{\e\to 0^+}\dbE\Big[\sup_{s\in[t,T]}|M_\e(s)-M_0(s)|^2\Big]\\
\label{M-e-M}&\q\les K\lim_{\e\to 0^+}\dbE\int_t^T\[|M_0(s)|^2|\ti A_\e(s)-\ti A_0(s)|^2
           +|\ti Q_\e(s)-\ti Q_0(s)|^2\]ds=0.
\end{align}
Note that by  \ref{ass:H4} and \cite[Proposition 2.2]{Sun-Xiong-Yong 2018},
$M_\e$ is uniformly bounded with respect to $\e$.
Thus by  dominated convergence theorem again, the above implies that
\bel{limit-EM}
\lim_{\e\to0^+}\dbE\lan M_\e(t)\eta,\eta\ran=\dbE\lan M_0(t)\eta,\eta\ran.
\ee
For any given $u\in\cU[t,T]$, by \rf{Prop-VS-1} and the standard estimates of SDEs and BSDEs, we have
\bel{limit-X}
\lim_{\e\to 0^+}\dbE\Big[\sup_{s\in[t,T]}|\bar X_\e(s)-\bar X_0(s)|^2\Big]
\les K \lim_{\e\to 0^+}\dbE \int_t^T\Big |\ti A_\e(s)\bar X_0(s)-\ti A_0(s)\bar X_0(s)\Big|^2ds=0,
\ee
and
\begin{align}
\nn&\lim_{\e\to 0^+}\dbE\Big[\sup_{s\in[t,T]}|\bar Y_\e(s)-\bar Y_0(s)|^2\Big]
+\lim_{\e\to 0^+}\dbE\int_t^T|\bar Z_\e(s)-\bar Z_0(s)|^2ds\\
\nn&\q\les K \lim_{\e\to 0^+}\dbE \int_t^T \Big|\ti A_\e(s)^\top\bar Y_0(s)+\ti Q_\e(s)\bar X_\e(s)
-\ti A_0(s)^\top\bar Y_0(s)-\ti Q_0(s)\bar X_0(s)\Big|^2ds\\
\nn&\hp{\q\les}+K \lim_{\e\to 0^+}\dbE\big|\ti G_\e \ti X_\e(T)-\ti G_0 \ti X_0(T)\big|^2\\
\label{limit-Y}&\q=0.
\end{align}
Combining \rf{limit-X} with \rf{limit-Y}, by the definition  \rf{def-cN} of $\cN_{t,\e}$, we get
\bel{limit-cN}
\lim_{\e\to0^+}\|\cN_{t,\e} u-\cN_{t,0}u\|=0,\q\forall u\in\cU[t,T].
\ee
Similarly, we have
\bel{limit-cL}
\lim_{\e\to0^+}\|\cL_{t,\e} \eta-\cL_{t,0}\eta\|=0,\q\forall \eta\in L^2_{\cF_t}(\Om;\dbR^n).
\ee
For any $u\in\cU[t,T]$, by the representation \rf{ti-J-rep} of the cost functional and \ref{ass:H4},
we have
\begin{align}
\nn\lan \cN_{t,\e} u,u\ran&=\ti J_\e(t,0;u)\\
\nn&=\dbE \int_t^T\[\big\lan\ti Q_\e(s)\bar X_\e(s),\bar X_\e(s)\big\ran+2\big\lan \ti S(s)\bar X_\e(s),u(s)\big\ran
       +\big\lan \ti R(s)u(s),u(s)\big\ran\]ds\\
\nn  &\hp{=}+\dbE\big\lan \ti G_\e \bar X_\e(T),\bar X_\e(T)\big\ran\\
\nn&\ges\dbE \int_t^T\[\big\lan\ti Q_\e(s)\bar X_\e(s),\bar X_\e(s)\big\ran+2\big\lan \ti S(s)\bar X_\e(s),u(s)\big\ran
       +\big\lan \ti R(s)u(s),u(s)\big\ran\]ds\\
\nn&= \dbE \int_t^T\[\big\lan(\ti Q_\e-\ti S^\top \ti R^{-1}\ti S)\bar X_\e,\bar X_\e\big\ran
      +\big\lan \ti R(u + \ti R^{-1}\ti S\bar X_\e),u + \ti R^{-1}\ti S\bar X_\e\big\ran \]ds\\
\label{Je-uni-convex}&\ges\d \dbE \int_t^T\big|u + \ti R^{-1}\ti S\bar X_\e\big|^2ds.
\end{align}
Define a linear operator $\cT_\e:\cU[t,T]\to\cU[t,T]$  by
$$
\cT_\e u=u + \ti R^{-1}\ti S\bar X_\e,\q u\in\cU[t,T].
$$
Then $\cT_\e$ is uniformly bounded and bijective,
with its inverse $\cT_\e^{-1}$ given by
$$
\cT_\e^{-1} u=u - \ti R^{-1}\ti S\ti X_\e,\q u\in\cU[t,T],
$$
where $\ti X_\e$ is the solution of
\bel{ti-X-e}\left\{\begin{aligned}
d\ti X_\e(s) &=\ti A_\e(s)\ti X_\e(s)ds+ \big\{- \ti R^{-1}(s)\ti S(s)\ti X_\e(s)+u(s)\big\} dW(s),\\
 \bar X_\e(t)&=0.
\end{aligned}\right.\ee
Since $\ti A_\e$ is uniformly bounded,
$\|\cT_\e^{-1}\|$ is clearly uniformly bounded with respect to $\e$. 
Thus, we have
\bel{u-S-u}
\dbE \int_t^T\big|u + \ti R^{-1}\ti S\bar X_\e\big|^2ds
=\|\cT_\e u\|^2\ges {1\over \|\cT_\e^{-1}\|^2}\|\cT_\e^{-1}T_\e u\|^2
={1\over \|\cT_\e^{-1}\|^2}\| u\|^2>\gamma\| u\|^2,
\ee
with $\g={1\over \sup_{\e>0}\|\cT_\e^{-1}\|^2}>0$.
Substituting the above into \rf{Je-uni-convex}, we get
\bel{Je-uni-convex-final}
\lan \cN_{t,\e} u,u\ran=\ti J_\e(t,0;u)\ges\d\g \dbE \int_t^T|u(s) |^2ds.
\ee
It follows that
\bel{N-e-inverse}
\|\cN_{t,\e}^{-1}\|\les {1\over \d\g},
\ee
where $\cN_{t,\e}^{-1}$ is the inverse of $\cN_{t,\e}$.
Then by \rf{limit-cN}, we get
\begin{align}
\nn\lim_{\e\to0^+}\left\|\cN_{t,\e}^{-1}u-\cN_{t,0}^{-1}u\right\|
\nn&=\lim_{\e\to0^+}\left\|\cN_{t,\e}^{-1}\cN_{t,0}\cN_{t,0}^{-1}u-\cN_{t,\e}^{-1}\cN_{t,\e}\cN_{t,0}^{-1}u\right\|\\
\nn &\les\lim_{\e\to0^+}\left\|\cN_{t,\e}^{-1}\big\|\big\|\cN_{t,0}\cN_{t,0}^{-1}u-\cN_{t,\e}\cN_{t,0}^{-1}u\right\|\\
\label{N-e-inverse-con}&\les{1\over \d\g}\lim_{\e\to0^+}\left\|\cN_{t,0}\cN_{t,0}^{-1}u-\cN_{t,\e}\cN_{t,0}^{-1}u\right\|
=0,\q\forall u\in\cU[t,T].
\end{align}
By \cite[Corollary 3.5]{Sun-Xiong-Yong 2018}, the (unique) optimal control of Problem (SLQ$_\e$) for $\eta$ is given by
$$
u^*_\e=-\cN_{t,\e}^{-1}\cL_{t,\e} \eta.
$$
Substituting the above into \rf{ti-J-rep} yields that
\bel{ti-V-rep}
\ti V_\e(t,\eta)=\ti J_\e(t,\eta;u^*_\e)
=-\big\lan\cN_{t,\e}^{-1}\cL_{t,\e} \eta,\cL_{t,\e} \eta\big\ran
+\dbE\big\lan M_\e(t)\eta,\eta\big\ran, \q  \eta\in L^2_{\cF_t}(\Om;\dbR^n).
\ee
Then combining the above with \rf{uni-bounded-L}--\rf{N-e-inverse}, we get
\begin{align*}
\big|\ti V_\e(t,\eta)-\ti V_0(t,\eta)\big|
&=\big|-\big\lan\cN_{t,\e}^{-1}\cL_{t,\e} \eta,\cL_{t,\e} \eta\big\ran
+\big\lan\cN_{t,0}^{-1}\cL_{t,0} \eta,\cL_{t,0} \eta\big\ran+\dbE\big\lan M_\e(t)\eta,\eta\big\ran\\
&\hp{=\q}
-\dbE\big\lan M_0(t)\eta,\eta\big\ran\big|\\
&=\big|-\big\lan\cN_{t,\e}^{-1}\cL_{t,\e} \eta,\cL_{t,\e} \eta-\cL_{t,0} \eta\big\ran
-\big\lan\cN_{t,\e}^{-1}(\cL_{t,\e} \eta-\cL_{t,0} \eta),\cL_{t,0} \eta\big\ran\\
&\hp{=\q}
-\big\lan(\cN_{t,\e}^{-1}-\cN_{t,0}^{-1})\cL_{t,0} \eta,\cL_{t,0} \eta\big\ran
+\dbE\big\lan M_\e(t)\eta,\eta\big\ran-\dbE\big\lan M_0(t)\eta,\eta\big\ran\big|\\
&\les\Big\{\big\|\cN_{t,\e}^{-1}\big\|\times\big\|\cL_{t,\e}\big\|\times\big \|\eta\big\|
\times\big\|\cL_{t,\e} \eta-\cL_{t,0} \eta\big\|+\big\|\cN_{t,\e}^{-1}\big\|
\times \big\|\cL_{t,\e} \eta-\cL_{t,0} \eta\big\|\\
&\hp{=\q}
\times\big\|\cL_{t,0}\big\|\times \big\|\eta\big\|
+ \big\|\cN^{-1}_{t,\e}\cL_{t,0} \eta-\cN^{-1}_{t,0}\cL_{t,0} \eta\big\|
\times\big\|\cL_{t,0}\big\| \times\big\|\eta\big\|\Big\}\\
&\hp{=\q}
+\big|\dbE\big\lan M_\e(t)\eta,\eta\big\ran-\dbE\big\lan M_0(t)\eta,\eta\big\ran\big|\\
&\les K \big\|\eta\big\|\Big\{ \big\|\cL_{t,\e} \eta-\cL_{t,0} \eta\big\|
+\big \|\cN^{-1}_{t,\e}\cL_{t,0} \eta-\cN^{-1}_{t,0}\cL_{t,0} \eta\big\|\Big\}\\
&\hp{=\q}
+\big|\dbE\big\lan M_\e(t)\eta,\eta\big\ran-\dbE\big\lan M_0(t)\eta,\eta\big\ran\big|.
\end{align*}
Therefore, by  \rf{limit-EM}--\rf{limit-cL}--\rf{N-e-inverse-con}, we have
\begin{align*}
\lim_{\e\to0^+}\big|\ti V_\e(t,\eta)-\ti V_0(t,\eta)\big|
&\les K \lim_{\e\to0^+}\big\|\eta\big\|\Big\{ \big\|\cL_{t,\e} \eta-\cL_{t,0} \eta\big\|
+\big \|\cN^{-1}_{t,\e}\cL_{t,0} \eta-\cN^{-1}_{t,0}\cL_{t,0} \eta\big\|\Big\}\\
&\hp{=}
+\lim_{\e\to0^+}\big|\dbE\big\lan M_\e(t)\eta,\eta\big\ran-\dbE\big\lan M_0(t)\eta,\eta\big\ran\big|\\
&=0,\qq \forall\, \eta\in L^2_{\cF_t}(\Om;\dbR^n).
\end{align*}
\end{proof}

We now are ready to state and prove the main result of this section.

\begin{theorem}\label{solvability-Riccati-equation}
Let {\rm\ref{ass:H1}--\ref{ass:H2}--\ref{ass:H3}} hold.
Then  Riccati equation \rf{BLQ-Riccati-Equation} has a solution
$(\Si,\L)\in L^\i_{\dbF}(\Om;C([0,T];\dbS^n_+))\times L^2_{\dbF}(0,T;\dbS^{ n})$.
\end{theorem}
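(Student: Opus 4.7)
The plan is to solve \rf{BLQ-Riccati-Equation} by combining a perturbation/monotone-limit argument with a method of undetermined coefficients, bypassing a direct treatment of this multi-dimensional quadratic BSDE. The starting point is the forward Riccati equation \rf{Riccati-Pe}: thanks to \autoref{P-e-UP}, $P_\e(s)\ges\a_\e I_n$, so the matrix inverse $\Si_\e(s)\deq P_\e(s)^{-1}$ is well defined. Applying the matrix It\^o formula for inverses of semimartingales and rearranging (this is tedious but elementary algebra, and is exactly the computation motivating the form of \rf{BLQ-Riccati-Equation-e1}), one verifies that $(\Si_\e,\L_\e)$, with $\L_\e$ built from $P_\e$ and $\Pi_\e$ via this It\^o expansion, is an adapted solution of the perturbed equation \rf{BLQ-Riccati-Equation-e1} on $[0,T]$ with terminal value $\Si_\e(T)=\e I_n$.

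Next, I would establish that $\{\Si_\e\}_{\e>0}$ is positive semi-definite, uniformly bounded on $[0,T]\times\Om$, and monotone (decreasing as $\e\searrow0$). Positive semi-definiteness is inherited from $P_\e>0$. Monotonicity follows from a comparison principle applied to the forward Riccati equation, or equivalently from the monotonicity of the value function $V_\e(t,\eta)=\lan P_\e(t)\eta,\eta\ran$ of the auxiliary forward LQ problem \rf{state-SDE-e}--\rf{cost-X-e} in $\e^{-1}$. Uniform boundedness of $\Si_\e$, independent of $\e$, can be obtained by giving $\lan\Si_\e(t)\eta,\eta\ran$ a dual LQ interpretation in terms of Problem (BLQ) at a scaled state, combined with Tang's uniform bounds for the forward Riccati equation from \cite{Tang 2003}. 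Monotone convergence then produces $\Si(s,\om)\deq\lim_{\e\to0^+}\Si_\e(s,\om)$ belonging to $L^\i_{\dbF}(\Om;C([0,T];\dbS_+^n))$, with continuity recovered either from equicontinuity of $\{\Si_\e\}_\e$ or a posteriori from the reconstruction in the next step.

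The hard part is the construction of $\L$. As explained in the introduction, $\{\L_\e\}_\e$ is not expected to converge in any strong sense, because \rf{BLQ-Riccati-Equation-e1} is a multi-dimensional quadratic BSDE to which the usual stability estimates do not apply. Instead of trying to pass to the limit in $\L_\e$, I would apply a method of undetermined coefficients guided by the decoupling formula \rf{decoupling-main} of \autoref{decoupling}. For each $(t,\xi)\in[0,T]\times L^2_{\cF_T}(\Om;\dbR^n)$, let $(Y^*,Z^*,X^*)$ be the unique adapted solution of \rf{Y-X-star-nou} supplied by \autoref{thm-solvability-FBSDE}. Postulating $Y^*=-\Si X^*-\f$ with an auxiliary $(\f,\b)$ to be determined, then applying It\^o's formula to $\Si X^*+\f$ and matching both the drift and the diffusion against those in the BSDE for $Y^*$, one reads off simultaneously the Riccati drift that $\Si$ must satisfy and a candidate for $\L$ expressed as a functional of $\Si$, $X^*$, $\f$, $\b$. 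The arbitrariness of $\xi$ together with the uniqueness of the optimal solution pins down $\L$ unambiguously, while the non-degeneracy assumption \ref{ass:H3} is what makes the algebraic inversion steps (notably of $N$ and of $I_n+\Si N$) well posed. The stability of value functions in \autoref{stability-value-function-SLQ} would be used to reconcile this candidate $(\Si,\L)$ with the perturbed pairs $(\Si_\e,\L_\e)$.

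The main obstacle, therefore, is not the existence of a candidate $\L$---which emerges algebraically from the undetermined-coefficients ansatz---but rather the verification that this candidate lies in $L^2_{\dbF}(0,T;\dbS^n)$ and that the resulting $(\Si,\L)$ solves \rf{BLQ-Riccati-Equation} globally in the required spaces. This is the step that fundamentally needs \ref{ass:H3}, since square-integrability must be extracted despite the quadratic $\L N(I_n+\Si N)^{-1}\L$ term in the drift, and the uniform lower bound $N\ges\d I_n$ is exactly what allows one to absorb such quadratic contributions in the estimates after matching coefficients.
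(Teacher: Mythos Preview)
Your outline tracks the paper's strategy through the first half: inverting $P_\e$ via \autoref{P-e-UP}, checking that $(\Si_\e,\L_\e)=(P_\e^{-1},P_\e^{-1}\Pi_\e P_\e^{-1})$ solves the perturbed equation, obtaining monotonicity from the comparison $P_{\e_1}\les P_{\e_2}$ for $\e_2\les\e_1$, and passing to a pointwise limit $\Si$. That part is fine.

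The gap is in your implementation of the ``method of undetermined coefficients.'' You propose to take the optimality system solution $(Y^*,Z^*,X^*)$, set $\f=-\Si X^*-Y^*$, apply It\^o's formula to $\Si X^*+\f$, and read off $\L$ by matching diffusion terms. But to apply It\^o's formula to $\Si X^*$ you must already know that $\Si$ is an It\^o semimartingale with a decomposition $d\Si=(\cds)ds-\L\,dW$; this is precisely what you are trying to prove. At this stage $\Si$ is only known as a bounded pointwise limit of $\Si_\e$, with no semimartingale structure. Your route is circular, and ``arbitrariness of $\xi$'' cannot fix it because the obstruction is not identifying $\L$ among candidates but producing any It\^o decomposition for $\Si$ at all. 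Equicontinuity of $\{\Si_\e\}_\e$ is likewise not available, so continuity of $\Si$ cannot be claimed directly either.

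The paper's actual mechanism is different and is what makes the argument close. One introduces a \emph{new} forward-type stochastic Riccati equation whose coefficients $\ti A,\ti Q,\ti S,\ti R,\ti G$ are built from the already-obtained limit $\Si$ (see \rf{new-SLQ-RE}--\rf{ti-coefficients}); under \ref{ass:H3} these coefficients satisfy the standard conditions, so by the forward LQ theory this equation has a unique solution $(\ti P,\ti\Pi)\in L^\i_\dbF(\Om;C([0,T];\dbS^n_+))\times L^2_\dbF(0,T;\dbS^n)$. A purely algebraic computation shows that if $\ti P=\Si$ then $(\ti P,-\ti\Pi)$ solves \rf{BLQ-Riccati-Equation}. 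The identification $\ti P=\Si$ is then established by observing that $(\Si_\e,-\L_\e)$ solves the $\e$-version of \rf{new-SLQ-RE} and invoking \autoref{stability-value-function-SLQ} to pass to the limit in the associated value functions. This delivers the semimartingale property of $\Si$, the diffusion term $\L=-\ti\Pi\in L^2$, and the continuity $\Si\in L^\i_\dbF(\Om;C([0,T];\dbS^n_+))$ simultaneously, without ever needing convergence of $\L_\e$ or an a priori It\^o decomposition for $\Si$.
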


\begin{proof}
For any $\e>0$, we consider the following perturbed equation of \rf{BLQ-Riccati-Equation}:
\bel{BLQ-Riccati-Equation-e}
\left\{
\begin{aligned}
d\Si_\e(s)&=\[\Si_\e A^\top+A\Si_\e+\Si_\e Q\Si_\e-BR^{-1}B^\top+\L_\e N(I_n+\Si_\e N)^{-1}\L_\e\\
&\hp{=\[}
-C(I_n+\Si_\e N)^{-1}\Si_\e C^\top-C(I_n+\Si_\e N)^{-1}\L_\e-\L_\e(I_n+N\Si_\e)^{-1}C^\top\]ds\\
&\hp{=}
-\L_\e dW(s),\q s\in[0,T],\\
\Si_\e(T)&=\e I_n.
\end{aligned}\right.
\ee
For the given $\e>0$,
let $(P_\e,\L_\e)\in L^\i_{\dbF}(\Om;C([0,T];\dbS_+^n))\times L^2_{\dbF}(0,T;\dbS^n)$
be the unique solution of Riccati equation \rf{Riccati-Pe}.
By \autoref{P-e-UP}, $P_\e$ is uniformly positive definite (for the given $\e$).
Hence $P_\e$ is invertible, and its inverse $P_\e^{-1}$ is positive definite and bounded (for the given $\e$).
Let
\bel{def-Si-e}
(\Si_\e,\L_\e)\equiv(P^{-1}_\e,P^{-1}_\e\Pi_\e P^{-1}_\e),
\ee
then $(\Si_\e,\L_\e)\in L^\i_{\dbF}(\Om;C([0,T];\dbS^n_+))\times L^2_{\dbF}(0,T;\dbS^n)$.
We shall show that $(\Si_\e,\L_\e)$ defined by \rf{def-Si-e} is a solution of \rf{BLQ-Riccati-Equation-e}.
Using the fact that
$$
0=d(\Si_\e P_\e)=d\Si_\e P_\e+\Si_\e dP_\e+d\Si_\e dP_\e,
$$
we have
\bel{d-Sie}
d\Si_\e= -\Si_\e dP_\e P_\e^{-1}-d\Si_\e dP_\e P_\e^{-1}.
\ee
For convenience, we denote
\bel{Si-e-M}
d\Si_\e(s)=(I)ds-(II)dW(s).
\ee
By \rf{d-Sie}--\rf{Riccati-Pe}, the diffusion term  in \rf{Si-e-M} is given by
\bel{M-e}
(II)=\Si_\e\Pi_\e P_\e^{-1}= P_\e^{-1}\Pi_\e P_\e^{-1}=\L_\e,
\ee
and then the drift term in \rf{Si-e-M} reads
\begin{align}
\nn (I)&=\Si_\e\big\{P_\e A+A^T P_\e+Q-(P_\e C+\Pi_\e)(N+P_\e)^{-1}(C^\top P_e+\Pi_\e)\\
\nn&\hp{=\Si_\e\big\{}
-P_\e BR^{-1}B^\top P_\e\big\}P_\e^{-1}+\Si_\e\Pi_\e P_\e^{-1}\Pi_\e P_\e^{-1}\\
\nn&=A\Si_\e+\Si_\e A^\top+\Si_\e Q\Si_\e-C(N+P_\e)^{-1}C^\top-C(N+P_\e)^{-1}\Pi_\e \Si_\e\\
\nn&\hp{=} -\Si_\e\Pi_\e (N+P_\e)^{-1}C^\top-\Si_\e\Pi_\e(N+P_\e)^{-1}\Pi_\e\Si_\e-BR^{-1}B^\top+\Si_\e\Pi_\e P_\e^{-1}\Pi_\e\Si_\e\\
\nn&=A\Si_\e+\Si_\e A^\top+\Si_\e Q\Si_\e-C(N+P_\e)^{-1}C^\top-C(N+P_\e)^{-1}\Pi_\e \Si_\e\\
\nn&\q -\Si_\e\Pi_\e (N+P_\e)^{-1}C^\top+\Si_\e\Pi_\e [P_\e^{-1}(N+P_\e)-I_n](N+P_\e)^{-1}\Pi_\e\Si_\e-BR^{-1}B^\top\\
%
%
%
\nn&=A\Si_\e+\Si_\e A^\top+\Si_\e Q\Si_\e-C(N+P_\e)^{-1}C^\top-C(N+P_\e)^{-1}\Pi_\e \Si_\e\\
\label{K-e}&\hp{=} -\Si_\e\Pi_\e (N+P_\e)^{-1}C^\top+\Si_\e\Pi_\e\Si_\e N(N+P_\e)^{-1}\Pi_\e\Si_\e-BR^{-1}B^\top.
\end{align}
By the definitions of $\Si_\e,\L_\e$ and using the facts that
\begin{align*}
(N+P_\e)^{-1}=(I_n+P_\e^{-1} N)^{-1}P^{-1}_\e=(I_n+\Si_\e N)^{-1}\Si_\e;\\
(N+P_\e)^{-1}=P^{-1}_\e(I_n+ N P_\e^{-1})^{-1}=\Si_\e(I_n+N\Si_\e)^{-1},
\end{align*}
we can rewrite \rf{K-e} as follows:
\begin{align}
\nn(I)&=A\Si_\e+\Si_\e A^\top+\Si_\e Q\Si_\e-C(I_n+\Si_\e N)^{-1}\Si_\e C^\top-C(I_n+\Si_\e N)^{-1}\Si_\e\Pi_\e \Si_\e\\
\nn&\hp{=} -\Si_\e\Pi_\e \Si_\e(I_n+N\Si_\e)^{-1}C^\top+\Si_\e\Pi_\e\Si_\e N(I_n+\Si_\e N)^{-1}\Si_\e\Pi_\e\Si_\e-BR^{-1}B^\top\\
\nn&=A\Si_\e+\Si_\e A^\top+\Si_\e Q\Si_\e-C(I_n+\Si_\e N)^{-1}\Si_\e C^\top-C(I_n+\Si_\e N)^{-1}\L_\e\\
\label{K-e1}&\hp{=} -\L_\e(I_n+N\Si_\e)^{-1}C^\top+\L_\e N(I_n+\Si_\e N)^{-1}\L_\e-BR^{-1}B^\top.
\end{align}
Note that $\Si_\e$ satisfies the terminal condition
$\Si_\e(T)=P^{-1}_\e(T)=\e I_n.$
Substituting \rf{K-e1}--\rf{M-e} into \rf{Si-e-M}, then it is clearly seen that
$(\Si_\e,\L_\e)$ defined by \rf{def-Si-e} satisfies  equation \rf{BLQ-Riccati-Equation-e}.

\ms

By \cite[Theorem 5.2 ]{Sun-Xiong-Yong 2018}, we have
\bel{P-1-2}
P_{\e_1} \les P_{\e_2},\q \forall\, 0< \e_2\les \e_1<\i.
\ee
Note that for a given $\e_0>0$,
\autoref{P-e-UP} shows that  there exists a constant   $\a_0>0$ such that
\bel{P-e0-e1}
\a_0 I_{n}\les P_{\e_0}.
\ee
Combining the above with \rf{P-1-2}, we get
\bel{P-e0-e12}
\a_0 I_{n}\les P_{\e_0}\les P_\e,\q\forall\, 0< \e\les \e_0.
\ee
Since $\Si_\e$ is the inverse of $P_\e$, the above implies that
\bel{Si-e0-e}
0\les\Si_\e\les \Si_{\e_0}\les {1\over\a_o} I_{n},\q \forall\, 0<\e\les \e_0.
\ee
Then by monotone convergence theorem,
there exists a $\Si\in L^\i_{\dbF}(0,T;\dbS_+^n)$ such that
\bel{Si-e-Si}
\lim_{\e\to 0^+}\Si_\e(s)=\Si(s),\q\as,\,\ae \,s\in[0,T].
\ee
We emphasize that in general the above boundedness  and monotonicity of $\Si_\e$ could not yield
\bel{Si-e-Si1}
\lim_{\e\to 0^+}\esssup_{s\in[0,T]}|\Si_\e(s)-\Si(s)|=0,\q\as
\ee

\ss

Next, we apply \autoref{stability-value-function-SLQ} to get the existence of the diffusion term
$\L$ in \rf{BLQ-Riccati-Equation} by introducing the following method of undetermined coefficients.
Consider the Riccati equation
\bel{new-SLQ-RE}
\left\{
\begin{aligned}
d\ti P(s)&=-\big\{\ti P\ti A+\ti A^\top\ti P+\ti Q-(\ti S^\top+\ti \Pi)(\ti R+\ti P)^{-1}(\ti S+\ti \Pi )\big\}ds\\
&\q+\ti\Pi dW(s),\qq s\in[0,T],\\
\ti P(T)&=\ti G,
\end{aligned}\right.
\ee
where
\begin{align}
\nn&\ti A=-A^\top-Q \Si ,\q \ti R=N^{-1},\q \ti S=N^{-1}C^\top,\q \ti G=0,\\
\label{ti-coefficients}&\ti Q=BR^{-1}B^\top+\Si Q\Si+C(I_n+\Si N)^{-1}\Si C^\top+C(I_n+\Si N)^{-1}N^{-1} C^\top.
\end{align}
By \ref{ass:H1}--\ref{ass:H2}--\ref{ass:H3} and the fact that $\Si\in L^\i_{\dbF}(0,T;\dbS_+^n)$,
$\ti A, \ti R, \ti Q, \ti S$ are bounded.
Moreover, by \ref{ass:H2}--\ref{ass:H3}, we have
\bel{ti-R1}
\ti R=N^{-1}\ges {1\over \l} I_m\gg 0,
\ee
and
\begin{align}
\nn\ti Q-\ti S^\top \ti R^{-1}\ti S&=BR^{-1}B^\top+\Si Q\Si+C(I_n+\Si N)^{-1}\Si C^\top\\
\nn&\hp{=}+C(I_n+\Si N)^{-1}N^{-1} C^\top-CN^{-1}NN^{-1}C^\top\\
\nn&=BR^{-1}B^\top+\Si Q\Si+CN^{-1}(N^{-1}+\Si )^{-1}\Si C^\top\\
\nn&\hp{=}+CN^{-1}(N^{-1}+\Si)^{-1}N^{-1} C^\top-CN^{-1}C^\top\\
\nn&=BR^{-1}B^\top+\Si Q\Si+CN^{-1}(N^{-1}+\Si )^{-1}(\Si +N^{-1})C^\top-CN^{-1}C^\top\\
\label{uniformly-convex}&=BR^{-1}B^\top+\Si Q\Si\ges 0.
\end{align}
Thus by \cite[Proposition 3.5]{Sun-Li-Yong 2016} and \cite[Theorem 6.2]{Sun-Xiong-Yong 2018},
Riccati equation \rf{new-SLQ-RE} admits a unique solution
$(\ti P,\ti\Pi)\in L^\i_{\dbF}(\Om;C([0,T];\dbS^n_+))\times L^2_{\dbF}(0,T;\dbS^n)$.
We claim that if
\bel{ti-P-Si}\ti P=\Si,\ee
then  $(\ti P,-\ti\Pi)$ is a solution of \rf{BLQ-Riccati-Equation}.
In fact, with the equality \rf{ti-P-Si}, equation \rf{new-SLQ-RE} can be rewritten as
\begin{align}
\nn d\ti P(s)&=-\big\{\ti P\ti A+\ti A^\top\ti P+\ti Q-(\ti S^\top+\ti \Pi)(\ti R+\ti P)^{-1}(\ti S+\ti \Pi )\big\}ds+\ti\Pi dW(s)\\
&\nn=-\Big\{\ti P(-A^\top- Q\Si)+(-A-\Si Q)\ti P+BR^{-1}B^\top+\Si Q\Si+C(I_n+\Si N)^{-1}\Si C^\top\\
&\nn\hp{=-\Big\{}
+C(I_n+\Si N)^{-1}N^{-1} C^\top-(CN^{-1}+\ti \Pi)(N^{-1}+\ti P)^{-1}(N^{-1}C^\top+\ti \Pi)\Big\}ds+\ti\Pi dW(s)\\
\nn&=\Big\{A\ti P+\Si Q\ti P+\ti PA^\top+\ti PQ\Si-BR^{-1}B^\top-\Si Q\Si-C(I_n+\Si N)^{-1}\Si C^\top\\
&\nn\hp{=\Big\{}
-CN^{-1}(N^{-1}+\Si)^{-1}N^{-1} C^\top+CN^{-1}(N^{-1}+\ti P)^{-1}N^{-1}C^\top+\ti\Pi(N^{-1}+\ti P)^{-1}\ti\Pi\\
&\nn\hp{=\Big\{}
+\ti\Pi (N^{-1}+\ti P)^{-1}N^{-1}C^\top+CN^{-1} (N^{-1}+\ti P)^{-1}\ti\Pi\Big\}ds+\ti\Pi dW(s)\\
\nn&=\Big\{A\ti P+\ti PA^\top+\ti PQ\Si-BR^{-1}B^\top-C(I_n+\Si N)^{-1}\Si C^\top+\ti\Pi(N^{-1}+\ti P)^{-1}\ti\Pi\\
&\nn\hp{=\Big\{}
+\ti\Pi (N^{-1}+\ti P)^{-1}N^{-1}C^\top+CN^{-1} (N^{-1}+\ti P)^{-1}\ti\Pi\Big\}ds+\ti\Pi dW(s)\\
\nn&=\Big\{A\ti P+\ti PA^\top+\ti PQ\ti P-BR^{-1}B^\top-C(I_n+\ti P N)^{-1}\ti P C^\top+(-\ti\Pi) N(I_n+\ti P N)^{-1}(-\ti\Pi)\\
\label{new-SLQ-RE1}&\hp{=\Big\{}
-(-\ti\Pi)(I_n+N\ti P)^{-1}C^\top-C(I_n+\ti PN)^{-1}(-\ti\Pi)\Big\}ds-(-\ti\Pi) dW(s),
\end{align}
which implies that $(\ti P,-\ti\Pi)$ satisfies  equation \rf{BLQ-Riccati-Equation}.
Thus it suffices to verify that the equality \rf{ti-P-Si} holds.

\ms

Similar to the arguments in \rf{new-SLQ-RE1}, we can obtain that
$(\ti P_\e,\ti\Pi_\e)\equiv(\Si_\e,-\L_\e)$ satisfies the following Riccati equation
\bel{new-SLQ-RE-e}
\left\{
\begin{aligned}
d\ti P_\e(s)
&=-\big\{\ti P_\e\ti A_\e+\ti A_\e^\top\ti P_\e+\ti Q_\e-(\ti S^\top+\ti \Pi_\e)(\ti R+\ti P_\e)^{-1}(\ti S+\ti \Pi_\e )\big\}ds\\
&\q+\ti\Pi_\e dW(s),\qq s\in[0,T],\\
\ti P_\e(T)&=\ti G_\e,
\end{aligned}\right.
\ee
with
\begin{align}
\nn&\ti A_\e=-A^\top-Q\Si_\e ,\q \ti R=N^{-1},\q \ti S=N^{-1}C^\top,\q \ti G_\e=\e I_n,\\
\label{ti-coefficients-e}
&\ti Q_\e=BR^{-1}B^\top+\Si_\e Q\Si_\e+C(I_n+\Si_\e N)^{-1}\Si_\e C^\top+C(I_n+\Si_\e N)^{-1}N^{-1} C^\top.
\end{align}
By \rf{Si-e0-e}, $\ti G_\e,\ti A_\e,\ti Q_\e$ are uniformly bounded (with respect to $0\les\e\les \e_0$).
Moreover, by \rf{Si-e-Si}, we have
\begin{align}
\nn&\lim_{\e\to 0^+}\ti G_\e=\ti G_0\equiv\ti G,\q
\lim_{\e\to 0^+}\ti A_\e(s)=\ti A_0(s)\equiv\ti A(s),\\
\label{ti-A-Ae}&\lim_{\e\to 0^+}\ti Q_\e(s)=\ti Q_0(s)\equiv\ti Q(s),\q \as,\,\ae\,s\in[0,T].
\end{align}
Note that $(\ti P_\e,\ti \Pi_\e )$ is the solution of the corresponding Riccati equation of Problem (SLQ$_\e$),
which is defined by \rf{def-V-e}.
By \cite[Proposition 5.5]{Sun-Xiong-Yong 2018},
the value function $\ti V_\e$ of Problem (SLQ$_\e$) can be given by
$$
\ti V_\e(t,\eta)=\dbE\lan\ti P_\e(t)\eta,\eta\ran,\q \forall\e\ges0,\q\eta\in L^2_{\cF_t}(\Om;\dbR^n).
$$
Then by  \autoref{stability-value-function-SLQ}, the convergence \rf{ti-A-Ae} implies that
\bel{con-Si-1}
\dbE\lan\ti P(t)\eta,\eta\ran=\ti V_0(t,\eta)=\lim_{\e\to0^+}\ti V_\e(t,\eta)
=\lim_{\e\to 0^+}\dbE\lan\ti P_\e(t)\eta,\eta\ran,\q \forall\eta\in L^2_{\cF_t}(\Om;\dbR^n).
\ee
Using the fact that  $\ti P_\e=\Si_\e$, by \rf{Si-e-Si}--\rf{Si-e0-e}  and dominated convergence theorem, we have
\bel{con-Si-2}
\lim_{\e\to 0^+}\dbE\lan\ti P_\e(t)\eta,\eta\ran
=\lim_{\e\to 0^+}\dbE\lan \Si_\e(t)\eta,\eta\ran
=\dbE\lan \Si(t)\eta,\eta\ran,\q \forall\eta\in L^2_{\cF_t}(\Om;\dbR^n).
\ee
Combining \rf{con-Si-1} with \rf{con-Si-2} together, we get
$$
\dbE\lan \ti P(t)\eta,\eta\ran=\dbE\lan \Si(t)\eta,\eta\ran,
\q \forall\eta\in L^2_{\cF_t}(\Om;\dbR^n).
$$
It follows that
$$\ti P(t)=\Si(t),\q\as,\q \ae\,\,t\in[0,T].$$
Thus the equality \rf{ti-P-Si} holds and the proof is completed.
\end{proof}

\begin{remark}\rm
From the proof of \autoref{solvability-Riccati-equation}, we see that if Riccati equation \rf{BLQ-Riccati-Equation} has a solution $(\Si,\L)$,
it can be rewritten as a Riccati equation associated with some forward LQ problem (see \rf{new-SLQ-RE} with $\ti P=\Si$).
Noticing this fact, to prove the solvability of  Riccati equation \rf{BLQ-Riccati-Equation}, we only need to show \rf{ti-P-Si} holds.
Since the coefficients of \rf{new-SLQ-RE} depend on $\Si$, which is a undetermined variable,
we would like to call the above arguments  a {\it method of undetermined coefficients}.
\end{remark}

With a solved solution $(\Si,\L)$ of Riccati equation \rf{BLQ-Riccati-Equation},
the following result shows that the decoupled system of
BSDE \rf{BLQ-BSDE-unbounded} and FSDE \rf{BLQ-fSDE} is uniquely solvable.

\begin{theorem}\label{solvability-BSDE-unbounded}
Let {\rm\ref{ass:H1}--\ref{ass:H2}--\ref{ass:H3}} hold
and $(\Si,\L)\in L^\i_{\dbF}(\Om;C([0,T];\dbS^n_+))\times L^2_{\dbF}(0,T;\dbS^{ n})$
be a solution of the Riccati equation \rf{BLQ-Riccati-Equation}.
Then for any $\xi\in L^2_{\cF_T}(\Om;\dbR^n)$,
the decoupled system of BSDE \rf{BLQ-BSDE-unbounded} and FSDE \rf{BLQ-fSDE} admits a unique solution
$(\f,\b,X)\in L^2_{\dbF}(\Om;C([t,T];\dbR^n))\times
L^1_{\dbF}(\Om; L^2(t,T;\dbR^{n}))\times L^2_{\dbF}(\Om;C([t,T];\dbR^n))$.
\end{theorem}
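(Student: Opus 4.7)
The plan is to sidestep any direct analysis of the BSDE \rf{BLQ-BSDE-unbounded} with unbounded random coefficients and instead exploit the unique solvability of the optimality system \rf{Y-X-star-nou} already established in \autoref{thm-solvability-FBSDE}. By that theorem, for the given terminal state $\xi\in L^2_{\cF_T}(\Om;\dbR^n)$, the coupled FBSDE \rf{Y-X-star-nou} admits a unique adapted solution $(Y^*,Z^*,X^*)\in L_\dbF^2(\Om;C([t,T];\dbR^n))\times L_{\dbF}^2(t,T;\dbR^n)\times L_\dbF^2(\Om;C([t,T];\dbR^n))$. Guided by the decoupling formulas \rf{decoupling-main} in \autoref{decoupling}, I would set
\begin{equation*}
X\deq X^*,\qquad \f\deq -\Si X^*-Y^*,\qquad \b\deq(I_n+\Si N)Z^*-(\L+\Si C^\top)X^*,
\end{equation*}
and verify that this triple solves the decoupled system \rf{BLQ-BSDE-unbounded}--\rf{BLQ-fSDE}.

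For integrability, $X$ and $\f$ clearly lie in $L^2_\dbF(\Om;C([t,T];\dbR^n))$ because $\Si\in L^\i_\dbF(\Om;C)$ and $(Y^*,X^*)\in L^2_\dbF(\Om;C)$. The only delicate term in $\b$ is the product $\L X^*$, the product of an $L^2$-process with a continuous process; Cauchy--Schwarz gives
\begin{equation*}
\dbE\bigg[\int_t^T|\L X^*|^2 ds\bigg]^{1/2}\les\Big(\dbE\sup_{s\in[t,T]}|X^*(s)|^2\Big)^{1/2}\bigg(\dbE\int_t^T|\L|^2 ds\bigg)^{1/2}<\i,
\end{equation*}
so $\b\in L^1_\dbF(\Om;L^2(t,T;\dbR^n))$. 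Existence then reduces to a direct verification: apply It\^o's formula to $s\mapsto\Si(s)X^*(s)$ using the Riccati equation \rf{BLQ-Riccati-Equation} and the dynamics of $X^*$ from \rf{Y-X-star-nou}, and combine with $dY^*$ via $d\f=-d(\Si X^*)-dY^*$. The diffusion term immediately identifies $\b$ as defined above; for the drift, the matrix identities used in the proof of \autoref{decoupling}---in particular \rf{Si-Multi}---cause the $\L$-quadratic and $\L$--$C$ cross terms to cancel, leaving the drift in the required form $(A+\Si Q)\f-C(I_n+\Si N)^{-1}\b+\L N(I_n+\Si N)^{-1}\b$. The terminal condition $\f(T)=-\xi$ follows from $\Si(T)=0$. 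For the FSDE, rewriting the drift and diffusion of $X^*$ via $Y^*=-\Si X-\f$ and $Z^*=(I_n+\Si N)^{-1}[(\L+\Si C^\top)X+\b]$ recovers \rf{BLQ-fSDE}, while $X^*(t)=G_tY^*(t)$ is equivalent to $(I_n+G_t\Si(t))X(t)=-G_t\f(t)$, i.e., $X(t)=-(I_n+G_t\Si(t))^{-1}G_t\f(t)$.

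For uniqueness, let $(\f_i,\b_i,X_i)$, $i=1,2$, be two solutions, and define $Y_i\deq -\Si X_i-\f_i$ and $Z_i\deq(I_n+\Si N)^{-1}(\L X_i+\Si C^\top X_i+\b_i)$. The computation in the proof of \autoref{decoupling} (which does not invoke uniqueness of the decoupled system) shows that each $(Y_i,Z_i,X_i)$ satisfies the optimality system \rf{Y-X-star-nou}, so \autoref{thm-solvability-FBSDE} forces $(Y_1,Z_1,X_1)=(Y_2,Z_2,X_2)$; inverting the invertible linear relations then gives $\f_1=\f_2$ and $\b_1=\b_2$. The main obstacle I anticipate is the careful bookkeeping in the It\^o expansion: many $\L$-containing terms appear and their cancellation depends delicately on the algebraic structure of the Riccati equation \rf{BLQ-Riccati-Equation}. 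The weak integrability of $\b$---only $L^1_\dbF(\Om;L^2)$ rather than $L^2_\dbF$---is the structural reason the statement does not claim a more standard solution class, and it is unavoidable because $\L$ is merely square-integrable in $(s,\om)$.
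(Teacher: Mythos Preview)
Your proposal is correct and follows essentially the same route as the paper: define $(\f,\b,X)$ from the unique solution $(Y^*,Z^*,X^*)$ of the optimality system \rf{Y-X-star-nou}, verify by It\^o's formula that this triple solves \rf{BLQ-BSDE-unbounded}--\rf{BLQ-fSDE}, and obtain uniqueness by reversing the transformation and invoking \autoref{thm-solvability-FBSDE} via the computation of \autoref{decoupling}. Your explicit Cauchy--Schwarz justification of $\b\in L^1_\dbF(\Om;L^2)$ is a welcome addition that the paper leaves implicit.
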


\begin{proof}
Let $(X^*,Y^*,Z^*)$ be the unique solution of the coupled FBSDE \rf{Y-X-star-nou}
and $(\Si,\L)$ be a solution of the Riccati equation \rf{BLQ-Riccati-Equation}.
Define
\bel{def-f-b}
\f^*\equiv-\Si X^*-Y^*,\q \b^*\equiv(\Si N+I_n)Z^*-\Si C^\top X^*-\L X^*.
\ee
It is clear to see that
$(\f^*,\b^*)\in L^2_{\dbF}(\Om;C([t,T];\dbR^n))\times L^1_{\dbF}(\Om; L^2(t,T;\dbR^{n}))$.
We shall show that $(\f^*,\b^*)$ is a solution of BSDE \rf{BLQ-BSDE-unbounded}.
In fact, by applying It\^{o}'s formula to $s\mapsto \f^*(s)\equiv-\Si(s) X^*(s)-Y^*(s)$, we have
\begin{align}
\nn&d\f^*(s)\equiv d[-\Si(s) X^*(s)-Y^*(s)]\\
\nn&\q =\Big\{-\big[\Si A^\top+A\Si+\Si Q\Si-BR^{-1}B^\top+\L N(\Si N+I)^{-1}\L-C(I+\Si N)^{-1}\Si C^\top\\
\nn&\hp{\q =\Big\{-\big[}
-C(\Si N+I)^{-1}\L-\L(N\Si+I)^{-1}C^\top\big]X^*-\Si[-A^\top X^*+ QY^*]\\
\nn&\hp{\q =\Big\{}
+\L[-C^\top X^*+ NZ^*]-[AY^*+ BR^{-1}B^\top X^*+CZ^*]\Big\}ds\\
\nn&\hp{\q =}
+\big\{\L X^*-\Si[-C^\top X^*+ NZ^*]-Z^*\big\}dW(s)\\
\nn&\q=\Big\{-A(\Si X^*+Y^*)-\Si Q(\Si X^*+Y^*)-\L N(\Si N+I)^{-1}\L X^*+\L(N\Si+I)^{-1}C^\top X^*\\
\nn&\hp{\q =\Big\{}
-\L C^\top X^*+\L NZ^*+C(I+\Si N)^{-1}\Si C^\top X^*+C(\Si N+I)^{-1}\L X^*-CZ^*\Big\}ds\\
\nn&\hp{\q=}
-\big\{(\Si N+I_n)Z^*-\Si C^\top X^*-\L X^*\big\}dW(s)\\
\nn&\q=\Big\{-(A+\Si Q)(\Si X^*+Y^*)+\L N(\Si N+I)^{-1}[(\Si N+I_n)Z^*-\Si C^\top X^*-\L X^*]\\
\nn&\hp{\q =\Big\{}
-C(\Si N+I)^{-1}[(\Si N+I_n)Z^*-\Si C^\top X^*-\L X^*]\Big\}ds\\
\label{f-f-star}&\hp{\q=}
-\big\{(\Si N+I_n)Z^*-\Si C^\top X^*-\L X^*\big\}dW(s),\q s\in[t,T].
\end{align}
With the definition  \rf{def-f-b} of $(\f^*,\b^*)$, the above can be rewritten as
\begin{align}
\nn d\f^*(s)
&=\big\{(A+\Si Q)\f^*(s)+\L N(\Si N+I)^{-1}\b^*(s)-C(\Si N+I)^{-1}\b^*(s)\big\}ds\\
\label{f-f-star-final}&\qq-\b^*(s)dW(s),\q s\in[t,T].
\end{align}
Since $\f^*$ also satisfies the terminal condition:
$$
\f^*(T)=-\Si(T) X^*(T)-Y^*(T)=-Y^*(T)=-\xi,
$$
$(\f^*,\b^*)$ is a solution of \rf{BLQ-BSDE-unbounded}.
Moreover, \rf{def-f-b} implies that
$$
Y^*=-\Si X^*-\f^*,\q Z^*=(I_n+\Si N)^{-1}[\b^*+\Si C^\top X^*+\L X^*].
$$
Substituting the above into the FSDE in \rf{Y-X-star-nou}, we have
\begin{align}
\nn&d X^*(s)=-\big\{(A^\top+Q\Si)X^*(s)+Q\f^*\big\}ds+\big\{-C^\top X^*(s)+N(I_n+\Si N)^{-1}\b^*\\
\label{X-star-f-star}&\qq\qq\q +N(I_n+\Si N)^{-1}(\L+\Si C^\top)X^*(s)\big\} dW(s),\qq s\in[t,T].
\end{align}
Note that
$$
Y^*(t)=-\Si(t)X^*(t)-\f^*(t)=-\Si(t)G_t Y^*(t)-\f^*(t),
$$
we get
$$
Y^*(t)=-(I_n+\Si(t)G_t)^{-1}\f^*(t),
$$
which implies that
$$
X^*(t)=G_tY^*(t)=-G_t(I_n+\Si(t)G_t)^{-1}\f^*(t)=-(I_n+G_t\Si(t))^{-1}G_t\f^*(t).
$$
Combining the above with \rf{X-star-f-star},
$X^*$ satisfies the FSDE \rf{BLQ-fSDE} with $(\f,\b)$  given by $(\f^*,\b^*)$.

\ms

Let $(\f,\b)\in L^2_{\dbF}(\Om;C([t,T];\dbR^n))\times L^1_{\dbF}(\Om; L^2(t,T;\dbR^{n}))$
be any solution of BSDE \rf{BLQ-BSDE-unbounded}
and $X\in L^2_{\dbF}(\Om;C([t,T];\dbR^n))$ be any solution of FSDE \rf{BLQ-fSDE} corresponding to the given $(\f,\b)$.
Let
$$
Y\equiv-\Si X-\f,\q  Z\equiv(I_n+\Si N)^{-1}[\Si C^\top X+\b+\L X].
$$
By \autoref{decoupling}, $(X,Y,Z)$ is the unique solution of FBSDE \rf{Y-X-star}.
Then the uniqueness of  $(\f,\b,X)$ follows from the uniqueness of the adapted solutions to \rf{Y-X-star} immediately.
\end{proof}

\begin{remark}\rm
By \autoref{solvability-Riccati-equation} and \autoref{solvability-BSDE-unbounded},
the solvabilities of Riccati equation \rf{BLQ-Riccati-Equation},
BSDE \rf{BLQ-BSDE-unbounded} and SDE \rf{BLQ-fSDE} are established.
Then a complete and explicit representation \rf{decoupling-u}
for the  optimal control  of Problem (BLQ) is  obtained,
via the solutions to Riccati equation \rf{BLQ-Riccati-Equation},
BSDE \rf{BLQ-BSDE-unbounded} and SDE \rf{BLQ-fSDE}.
\end{remark}

\section*{Acknowledgements} The authors would like to thank the associate editor and the anonymous referees for
their suggestive comments, which lead to this improved version of the paper.

\end{document}